\documentclass[11pt,letterpaper]{amsart}

\usepackage{graphicx}
\usepackage{amssymb}
\usepackage{amsthm}
\usepackage{amsmath}
\usepackage{stmaryrd}
\usepackage{cite}
\usepackage[mathscr]{eucal}
\usepackage{hyperref}
\usepackage[margin=1.7in]{geometry}
\usepackage{comment}
\usepackage{caption}
\usepackage{subcaption}
\usepackage[permil]{overpic}
\usepackage{url}
\usepackage{placeins}

\title[A vanishing result for harmonic $(n-1,1)$-forms]{A vanishing result for harmonic $(n-1,1)$-forms}
\author[Matthias Wink]{Matthias Wink}
\address{Department of Mathematics, University of California, Santa Barbara, South Hall 6607, Santa Barbara, CA 93106, USA}
\email{wink@math.ucsb.edu}

\keywords{K\"ahler manifolds, Hodge Numbers, Bochner Technique}

\makeatletter
\@namedef{subjclassname@2020}{
\textup{2020} Mathematics Subject Classification}
\makeatother

\subjclass[2020]{32Q10, 32Q15, 53C21, 53C55}
\begin{document}
\newcommand{\Ext}{\bigwedge\nolimits}
\newcommand{\Div}{\operatorname{div}}
\newcommand{\Hol} {\operatorname{Hol}}
\newcommand{\diam} {\operatorname{diam}}
\newcommand{\Scal} {\operatorname{Scal}}
\newcommand{\scal} {\operatorname{scal}}
\newcommand{\Ric} {\operatorname{Ric}}
\newcommand{\Hess} {\operatorname{Hess}}
\newcommand{\grad} {\operatorname{grad}}
\newcommand{\Sect} {\operatorname{Sect}}
\newcommand{\Rm} {\operatorname{Rm}}
\newcommand{ \Rmzero } {\mathring{\Rm}}
\newcommand{\Rc} {\operatorname{Rc}}
\newcommand{\Curv} {S_{B}^{2}\left( \mathfrak{so}(n) \right) }
\newcommand{ \tr } {\operatorname{tr}}
\newcommand{ \id } {\operatorname{id}}
\newcommand{ \Riczero } {\mathring{\Ric}}
\newcommand{ \ad } {\operatorname{ad}}
\newcommand{ \Ad } {\operatorname{Ad}}
\newcommand{ \dist } {\operatorname{dist}}
\newcommand{ \rank } {\operatorname{rank}}
\newcommand{\Vol}{\operatorname{Vol}}
\newcommand{\dVol}{\operatorname{dVol}}
\newcommand{ \zitieren }[1]{ \hspace{-3mm} \cite{#1}}
\newcommand{ \pr }{\operatorname{pr}}
\newcommand{\diag}{\operatorname{diag}}
\newcommand{\Lagr}{\mathcal{L}}
\newcommand{\av}{\operatorname{av}}
\newcommand{ \floor }[1]{ \lfloor #1 \rfloor }
\newcommand{ \ceil }[1]{ \lceil #1 \rceil }
\newcommand{\Sym} {\operatorname{Sym}}
\newcommand{\bcirc}{ \ \bar{\circ} \ }
\newcommand{\conj}[1]{ \overline{ #1 } }
\newcommand{\sign}[1]{\operatorname{sign}(#1)}
\newcommand{\cone}{\operatorname{cone}}
\newcommand{\pbd}{\varphi_{bar}^{\delta}}

\newtheorem{theorem}{Theorem}[section]
\newtheorem{definition}[theorem]{Definition}
\newtheorem{example}[theorem]{Example}
\newtheorem{remark}[theorem]{Remark}
\newtheorem{lemma}[theorem]{Lemma}
\newtheorem{proposition}[theorem]{Proposition}
\newtheorem{corollary}[theorem]{Corollary}
\newtheorem{assumption}[theorem]{Assumption}
\newtheorem{acknowledgment}[theorem]{Acknowledgment}
\newtheorem{DefAndLemma}[theorem]{Definition and lemma}

\newcommand{\R}{\mathbb{R}}
\newcommand{\N}{\mathbb{N}}
\newcommand{\Z}{\mathbb{Z}}
\newcommand{\Q}{\mathbb{Q}}
\newcommand{\C}{\mathbb{C}}
\newcommand{\F}{\mathbb{F}}
\newcommand{\X}{\mathcal{X}}
\newcommand{\D}{\mathcal{D}}
\newcommand{\Cont}{\mathcal{C}}

\renewcommand{\labelenumi}{(\alph{enumi})}
\newtheorem{maintheorem}{Theorem}[]
\renewcommand*{\themaintheorem}{\Alph{maintheorem}}
\newtheorem*{theorem*}{Theorem}
\newtheorem*{corollary*}{Corollary}
\newtheorem*{remark*}{Remark}
\newtheorem*{example*}{Example}
\newtheorem*{question*}{Question}
\newtheorem*{definition*}{Definition}
\newtheorem{conjecture}{Conjecture}
\renewcommand*{\theconjecture}{\Alph{conjecture}}
\newtheorem*{conjecture*}{Conjecture}

\begin{abstract}
We prove that every primitive harmonic $(n-1,1)$-form of a closed K\"ahler manifold of complex dimension $n$ vanishes provided its K\"ahler curvature operator is $\frac{n^2-n+2}{2}$-positive if $n \geq 4$, respectively $\frac{7}{2}$-positive if $n=3$. As a consequence, a closed K\"ahler manifold of complex dimension $n \geq 3$ with $3$-positive K\"ahler curvature operator is a real cohomology complex projective space.
\end{abstract}

\maketitle

\vspace{-2mm}

\section*{Introduction}

The interaction of the curvature and the topology of closed K\"ahler manifolds is a fundamental question in complex geometry. Hodge theory provides a tool to obtain vanishing results for Hodge numbers by studying harmonic forms. In \cite{BochnerVectorFieldsAndRic}, Bochner proved that every holomorphic $p$-form vanishes provided the Ricci curvature is $k$-positive, that is, if the sum of the lowest $k$ eigenvalues of the Ricci curvature is positive, then $h^{p,0}=0$ for $k \leq p \leq n.$ Recently, many more vanishing results for holomorphic forms have been obtained in \cite{YangRCpositivity, NiZhengComparisonAndVanishingKaehler,NiFundamentalGroupRationalConnectedness, NiZhengPositivityAndKodaira}.

With regard to the second Betti number, the Bochner technique was used in \cite{BishopGoldbergCohomKaehler, WilkingALieAlgebraicApproach} to prove that $b_2(M) =1$ provided $M$ has positive (orthogonal) bisectional curvature. In fact, due to the work of Mori \cite{MoriProjectiveManifoldsWithAmpleTangentBundles},  Siu-Yau \cite{SiuYauCompactKaehlerPosBiholCurv}, Chen \cite{ChenPosOrthBisectCurv} and Gu-Zhang \cite{GuZhangExtensionMokTheorem}, closed K\"ahler manifolds with positive orthogonal bisectional curvature are biholomorphic to complex projective space. 

Vanishing results for all Hodge numbers, i.e., characterizations of K\"ahler manifolds as a real cohomology projective space, were moreover established recently in \cite{PetersenWinkHodgeNumbers,WangYangWeitzenbockBochnerKodaira,BNPSWCalabiCurvatureOperator}. 

To explain these results, recall that the Riemannian curvature operator of a K\"ahler manifold vanishes on the orthogonal complement of the holonomy algebra. The induced operator $\mathcal{R} \colon \Lambda^{1,1}_{\R}TM \to \Lambda^{1,1}_{\R}TM$ is the K\"ahler curvature operator. We denote its eigenvalues by $\lambda_1 \leq \ldots \leq \lambda_{n^2}$ and call it $k$-positive if $\lambda_1 + \ldots + \lambda_{\floor{k}} + (k - \floor{k}) \lambda_{\floor{k}+1} >0.$

In \cite{PetersenWinkHodgeNumbers}, Petersen and the author proved that a closed K\"ahler manifold with $\left(3-\frac{2}{n}\right)$-positive curvature operator is a real cohomology $\mathbb{CP}^n.$ This is a consequence of a vanishing result for individual Hodge numbers $h^{p,q}.$ However, while vanishing of primitive $(p,p)$-forms only required $(n-p+1)$-positive K\"ahler curvature operator, the methods in \cite{PetersenWinkHodgeNumbers} required curvature conditions much stronger than $p$-positive Ricci curvature to prove $h^{p,0}=0.$ The assumption of $\left(3-\frac{2}{n}\right)$-positivity is exactly the curvature assumption in \cite[Theorem B]{PetersenWinkHodgeNumbers} needed to control primitive $(n-1,1)$-forms and force $h^{n-1,1}=0$. These observations raised the question of finding curvature conditions that imply vanishing of primitive harmonic $(p,q)$-forms that naturally recover Bochner's result for $(p,0)$-forms. 

To this end, in \cite{BNPSWCalabiCurvatureOperator}, the authors consider the Calabi curvature operator $C \colon T^{1,0}M \odot T^{1,0}M \to T^{1,0}M \odot T^{1,0}M$ and prove that a closed K\"ahler manifold is a real cohomology $\mathbb{CP}^n$ provided $C$ is $\frac{n}{2}$-positive. For even $n$ this is optimal in the sense that the complex quadric with its symmetric metric has $\frac{n}{2}$-nonnegative Calabi curvature operator but $b_n=2.$ However, the original question of providing refined curvature conditions for the K\"ahler curvature operator remained open. 

The goal of the paper is to introduce a new curvature condition that implies $h^{n-1,1}=0.$ 

\begin{maintheorem}
\label{TheoremHodgeNumbers}
    Let $(M,g)$ be a closed K\"ahler manifold of complex dimension $n.$ If $n \geq 4$ and the K\"ahler curvature operator is $\frac{n^2-n+2}{2}$-positive, then $h^{n-1,1}(M)=0.$

    If $n=3$ and the K\"ahler curvature operator is $3$-positive, then $h^{2,1}(M)=0.$
\end{maintheorem}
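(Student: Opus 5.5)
By Hodge theory and the Lefschetz decomposition, $H^{n-1,1}(M) \cong P^{n-1,1} \oplus L H^{n-2,0}(M)$. The plan is to show that both summands vanish under the stated curvature assumption.

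\textbf{The Lefschetz part.} Since $\frac{n^2-n+2}{2}$-positivity (respectively $3$-positivity when $n=3$) of $\mathcal{R}$ implies positivity of sums of Ricci eigenvalues, I would use Bochner's theorem for holomorphic forms to get $h^{n-2,0}=0$. Indeed, a Ricci eigenvalue is a sum of curvature terms $\mathcal{R}(X\wedge JX,\cdot)$, which can be bounded below by partial sums of eigenvalues of $\mathcal{R}$, so $h^{p,0}=0$ for $p\ge 1$ once $k \le n^2$. The trace identity $\sum \lambda_i = \scal/2$ gives positive scalar curvature, and weighted averages of eigenvalues give the Ricci positivity needed. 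It therefore suffices to show that every primitive harmonic $(n-1,1)$-form $\omega$ vanishes.

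\textbf{Bochner formula for $\omega$.} The Weitzenb\"ock formula reads $\Delta \omega = \nabla^*\nabla \omega + \mathrm{Ric}(\omega)$. In the K\"ahler setting the curvature term can be written as $g(\mathrm{Ric}(\omega),\omega)=\sum_{\alpha} \lambda_\alpha |\Xi_\alpha \omega|^2$, where $\{\Xi_\alpha\}$ is an orthonormal eigenbasis of $\mathcal{R}$ in $\Lambda^{1,1}_\R \cong \mathfrak{u}(n)$ acting on forms as derivations. Integrating this against $\omega$ shows that $\omega$ vanishes provided $\sum_\alpha \lambda_\alpha |\Xi_\alpha\omega|^2 >0$ whenever $\omega\neq 0$.

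\textbf{Pointwise estimate.} The standard lemma is: if $|\Xi_\alpha \omega|^2 \le \frac{1}{k}\sum_\beta |\Xi_\beta\omega|^2$ for every unit $\Xi_\alpha$, then $k$-positivity of $\mathcal{R}$ yields positivity of the curvature term. The key step is therefore to bound the ratio
\[
\frac{\max_{|\Xi|=1}|\Xi\omega|^2}{\sum_\beta|\Xi_\beta\omega|^2}
\]
for primitive $\omega\in\Lambda^{n-1,1}$. The sum is computed by Casimir-type identities for $\mathfrak{u}(n)$, which give a constant times $|\omega|^2$. The maximum is controlled as follows. Identify $\Lambda^{n-1,1}$ with $\Lambda^{0,1}\otimes\Lambda^{1,0}$ twisted by $\Lambda^{n,0}$, that is, essentially $\mathfrak{gl}(n)$, with primitivity corresponding to tracefree matrices $A$. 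Then $\Xi\in\mathfrak{u}(n)$ acts by the commutator $[\Xi,A]$ plus a trace term from $\Lambda^{n,0}$. One must bound $|[\Xi,A]+c\,\tr(\Xi)A|^2 \le C|\Xi|^2|A|^2$ optimally; the Casimir sum equals $2n|A|^2$ plus trace contributions. The ratio of these two quantities should produce $k=\frac{n^2-n+2}{2}$.

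\textbf{Main obstacle: $n=3$.} For $n=3$ the generic bound gives $k=4$, so a refined estimate is needed to reach $3$. I would exploit that the eigenvalue sum also involves $\tr\mathcal{R}$. Alternatively, I would split $\Xi$ into its trace and tracefree parts and use a Böttcher–Wenzel-type inequality $|[X,A]|^2\le 2|X|^2|A|^2$ with the sharper bound available in low dimension. Combining this with the already available $h^{p,0}$ vanishing should finish the case $n=3$.
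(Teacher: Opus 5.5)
\textbf{The core estimate cannot reach the stated constant.} Your ``standard lemma'' is the case $\eta=0$ of Corollary~\ref{CharacterizationBoundsOnRicL}. It requires $\mathcal{A}_{\omega}\le \frac{1}{k}\tr(\mathcal{A}_{\omega})\,\id$ for every primitive $\omega$. But $\tr(\mathcal{A}_{\omega})=(3n-2)|\omega|^2$. Moreover, for the primitive form $\omega=Z^1\wedge\cdots\wedge Z^{n-1}\wedge\overline{Z^n}$, the unit element $\Xi=\frac{1}{\sqrt{n}}\bigl(\sum_{i<n}I_{ii}-I_{nn}\bigr)$ gives $|\Xi\omega|^2=n|\omega|^2$.

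So this route yields at best $k=3-\frac{2}{n}$, which is exactly the Petersen--Wink condition the theorem improves on. No sharper commutator or B\"ottcher--Wenzel type inequality can change this, because the example is an exact computation. In any case the ratio $\tr/\lambda_{\max}$ is at most $3n-2$, which is linear in $n$ and cannot produce the quadratic $\frac{n^2-n+2}{2}$.

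The underlying reason is that your argument uses only the spectrum of $\mathcal{R}$ and never the Bianchi identity. For instance, $\mathcal{R}=\id-(k-\varepsilon)\,\Xi\otimes\Xi$ is $k$-positive for $k=\frac{n^2-n+2}{2}$. Yet $\sum_\alpha\lambda_\alpha|\Xi_\alpha\omega|^2=(3n-2-(k-\varepsilon)n)|\omega|^2<0$. This $\mathcal{R}$ is simply not a K\"ahler curvature operator.

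The missing idea is the paper's use of the orthogonal splitting $\Sym^2(\mathfrak{u}(V))=\Sym^2_B(\mathfrak{u}(V))\oplus\widehat{\Lambda^{2,2}_{\R}V^*}$. It gives $\tr(\mathcal{R}\mathcal{A}_\omega)=\tr(\mathcal{R}(\mathcal{A}_\omega+\hat\eta))$ for every $\eta\in\Lambda^{2,2}_{\R}V^*$. In the K\"ahler setting, $\tr(\hat\eta)$ can be positive. The paper carries this out as follows:
\begin{enumerate}
\item Write $\omega$ in the normal form $e^{it}\sum_k a_k\,\iota_{Z_k}\Omega\wedge\overline{Z^k}$ and compute $\mathcal{A}_\omega$ explicitly.
\item Take $\eta=\sum_{i<j}(2(a_i^2+a_j^2)-|\omega|^2-h_{ij})\,e^i\wedge f^i\wedge e^j\wedge f^j$, with $h_{ij}\ge\max\{(a_i+a_j)^2-|\omega|^2,0\}$ and $\sum_{j\ne i}h_{ij}=|\omega|^2$. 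These weights exist for $n\ge4$ by a weighted degree-sequence argument.
\item This gives $0\le\mathcal{A}_\omega+\hat\eta\le 2|\omega|^2\id$ with trace $(n^2-n+2)|\omega|^2$, which is where $\frac{n^2-n+2}{2}$ comes from.
\end{enumerate}

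\textbf{Other errors.} Several supporting claims are also wrong.
\begin{enumerate}
\item Your linear-algebra model is incorrect. Primitive $(n-1,1)$-forms form $\det((V^{1,0})^*)\otimes\Sym^2(V^{1,0})$, i.e.\ \emph{symmetric} matrices $a_{ij}=a_{ji}$ acted on by the twisted congruence $A\mapsto\det(U)^{-1}UAU^T$. The $\Lambda^2V^{1,0}$ part is $L\Lambda^{n-2,0}$. They are not tracefree matrices under commutators, and it is precisely the symmetry that allows the diagonal normal form.
\item For $n=3$ you have the monotonicity reversed. $3$-positivity is \emph{stronger} than $4$-positivity, so a bound of $4$ would already suffice. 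The actual difficulty is that the weights $h_{ij}$ need not exist when $n=3$; take e.g.\ $a=(0,1,1)$. The paper instead sets $h_{ij}=\max\{(a_i+a_j)^2-|\omega|^2,0\}$, which gives $\frac{7}{2}\ge 3$. The $3$-positivity in the statement is what the paper uses to get $\Ric>0$ and hence $h^{1,0}=0$.
\item Your claim ``$h^{p,0}=0$ once $k\le n^2$'' is false. $\mathbb{CP}^1\times T^2$ has $4$-positive K\"ahler curvature operator and $h^{1,0}=1$. What is true is that $np$-positivity of $\mathcal{R}$ makes $\Ric$ $p$-positive. Since $\frac{n^2-n+2}{2}\le n(n-2)$ for $n\ge4$, this yields $h^{n-2,0}=0$.
\end{enumerate}
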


We note that while Bochner's work \cite{BochnerVectorFieldsAndRic} implies $h^{p,0}=0$ provided the K\"ahler curvature operator is $np$-positive, the curvature condition in \cite{PetersenWinkHodgeNumbers} to control primitive $(p,q)$-forms is $\left( n+1 - \frac{p^2+q^2}{p+q} \right)$-positivity, and hence is (at most) linear in $n$. In contrast, the number of eigenvalues needed in Theorem \ref{TheoremHodgeNumbers} is quadratic in $n,$ similar to Bochner's result. In fact, the methods introduced in this paper can be used to recover that $h^{p,0}=0$ for manifolds with $np$-positive K\"ahler curvature operator, \cite{BochnerVectorFieldsAndRic}.

Combining Theorem \ref{TheoremHodgeNumbers} with \cite{BochnerVectorFieldsAndRic, PetersenWinkHodgeNumbers}, we obtain the following generalization of \cite[Theorem A]{PetersenWinkHodgeNumbers}.

\begin{maintheorem}
\label{MainTheoremCPn}
        Let $(M,g)$ be a closed K\"ahler manifold of complex dimension $n \geq 3$ and let 
        \begin{align*}
            k_n = \begin{cases}
                \hspace{6mm} 3 & \text{ if } n=3,4, \\
                \hspace{5mm} \frac{17}{5} & \text{ if } n=5, \\
                4-\frac{2}{n-1} & \text{ if } n \geq 6.
            \end{cases}
        \end{align*}
        
        If the K\"ahler curvature operator of $(M,g)$ is $k_n$-positive, then $M$ has the real cohomology ring of complex projective space. 
\end{maintheorem}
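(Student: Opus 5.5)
Theorem \ref{MainTheoremCPn} is a combination of three vanishing results, checked Hodge number by Hodge number. A closed Kähler manifold is a real cohomology $\mathbb{CP}^n$ as soon as all primitive harmonic $(p,q)$-forms with $p+q \leq n$ vanish, except the constants. By Hodge symmetry and Serre duality it suffices to treat $h^{p,q}$ with $p \geq q$ and $p+q \leq n$. Then Lefschetz decomposition reduces the statement to showing that the primitive cohomology in these bidegrees vanishes; the powers of the Kähler form then generate the whole cohomology ring. I would sort the primitive classes into three groups, handled by \cite{BochnerVectorFieldsAndRic}, by Theorem \ref{TheoremHodgeNumbers}, and by \cite[Theorem B]{PetersenWinkHodgeNumbers}. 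In each case the task is to check that $k_n$-positivity implies the curvature hypothesis required.

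First, the holomorphic forms $(p,0)$ with $p \geq 1$. Bochner's theorem needs $np$-positivity of the Kähler curvature operator, equivalently $p$-positive Ricci curvature. Since $k_n < 4 \leq n \leq np$ and $k$-positivity implies $k'$-positivity for $k' \geq k$, this case follows. Second, the bidegree $(n-1,1)$, and by symmetry $(1,n-1)$. Here I would apply Theorem \ref{TheoremHodgeNumbers}. For $n=3$ it requires $3$-positivity, and $k_3 = 3$. For $n \geq 4$ it requires $\frac{n^2-n+2}{2}$-positivity, and this number is $7$ for $n=4$, which is at least $k_n$ for every $n \geq 4$.

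The remaining bidegrees are $(p,q)$ with $1 \leq q \leq p$, $p+q \leq n$, and $(p,q) \neq (n-1,1)$. For these I would invoke the Petersen--Wink estimate, which kills primitive $(p,q)$-forms under $\left(n+1-\frac{p^2+q^2}{p+q}\right)$-positivity. The main obstacle is the arithmetic: I need the minimum over the admissible $(p,q)$ of $n+1-\frac{p^2+q^2}{p+q}$ to be at least $k_n$. The quantity $\frac{p^2+q^2}{p+q}$ grows when the pair becomes unbalanced or when $p+q$ grows. So the worst admissible cases, after removing $(n-1,1)$, are the following.
\begin{itemize}
\item The pair $(n-2,2)$ gives $n+1-\frac{(n-2)^2+4}{n} = 5 - \frac{8}{n}$.
\item The pair $(n-2,1)$ gives $n+1-\frac{(n-2)^2+1}{n-1} = 4-\frac{2}{n-1}$.
\item The pair $(p,p)$ gives $n+1-p$, which is at least $n+1-\floor{n/2}$.
\end{itemize}
I would compare these explicitly.
\begin{itemize}
\item For $n \geq 6$ the minimum is $4 - \frac{2}{n-1}$; this is exactly how $k_n$ is defined, and it shows the bound is sharp for this method.
\item For $n=5$, the pair $(3,2)$ gives $6 - \frac{13}{5} = \frac{17}{5}$, and the pair $(3,1)$ gives $6 - \frac{10}{4} = \frac{7}{2} > \frac{17}{5}$. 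Hence $\frac{17}{5}$ is the minimum.
\item For $n=4$, the pair $(2,2)$ gives $3$ and the pair $(2,1)$ gives $5 - \frac{5}{3} = \frac{10}{3}$. The minimum is $3$.
\item For $n=3$, only $(1,1)$ remains, giving $3$.
\end{itemize}
Every hypothesis is therefore implied by $k_n$-positivity. I would carry out the enumeration carefully for small $n$, making sure no admissible pair gives a smaller value than the ones listed above.

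Combining the three groups, all primitive cohomology vanishes except $H^{0,0}$. With the Kähler class generating the ring, $M$ has the real cohomology ring of $\mathbb{CP}^n$.
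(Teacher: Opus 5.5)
Your proposal is correct and matches the paper's proof: Bochner for the $(p,0)$-forms, Theorem \ref{TheoremHodgeNumbers} for the $(n-1,1)$-forms, and \cite[Theorem B]{PetersenWinkHodgeNumbers} for the remaining primitive modules, with $k_n$ obtained as the minimum of $5-\frac{8}{n}$, $4-\frac{2}{n-1}$ and the weaker conditions for the other bidegrees. One small slip: the chain $k_n<4\leq n$ fails for $n=3$, but $k_3=3=n\leq np$ still gives the inequality $k_n \leq np$ that you need.
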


\begin{remark*}
    \label{RemarkEpsilonImprovement}
    \normalfont
    For $n \geq 4,$ we explain in Example \ref{PWMaximizer} how to obtain $\varepsilon_n>0$ such that Theorem \ref{MainTheoremCPn} holds if the K\"ahler curvature operator is $(k_n+ \varepsilon_n)$-positive, see also Conjecture \ref{CPnConjecture} below.
\end{remark*}

The proofs of Theorem \ref{TheoremHodgeNumbers} and Theorem \ref{MainTheoremCPn} rely on a new understanding of the Bochner formula for primitive $(p,q)$-forms. It is therefore natural that they have rigidity and estimation analogues.

Rigidity or estimation results specifically for K\"ahler manifolds are also established in or follow from \cite{BochnerVectorFieldsAndRic, HowardSmythWuNonnegBiSect, WuCompactKaehlerNonnegBiSectII,MokUniformizationKaehler, GuNewProofGeneralizedFrankelConj, GuZhangExtensionMokTheorem, PetersenWinkHodgeNumbers,BNPSWCalabiCurvatureOperator, WangYangWeitzenbockBochnerKodaira}. For example, Mok \cite{MokUniformizationKaehler} studies K\"ahler manifolds with nonnegative bisectional curvature. 

Let $P^{p,q}M = \ker \left( \Lambda \colon \Lambda^{p,q}T^*M \to \Lambda^{p-1,q-1}T^*M \right)$ be the vector bundle of primitive $(p,q)$-forms, where $\Lambda$ is the dual of the Lefschetz map. The underlying result behind Theorems \ref{TheoremHodgeNumbers} and \ref{MainTheoremCPn} is

\begin{maintheorem}
\label{PrimitiveFormTheorem}
    Let $(M,g)$ be a closed K\"ahler manifold of complex dimension $n \geq 3$ and let $\mathcal{R} \colon \Lambda^{1,1}_{\R}TM \to \Lambda^{1,1}_{\R}TM$ denote its K\"ahler curvature operator with eigenvalues $\lambda_1 \leq \ldots \leq \lambda_{n^2}.$ 
        Let 
        \begin{align*}
            l_n = 
            \begin{cases}
                \frac{n^2-n+2}{2}=1+\binom{n}{2} & \text{ if } n \geq 4, \\
                \hspace{13.5mm} \frac{7}{2} & \text{ if } n = 3.
            \end{cases}
        \end{align*}
    \begin{enumerate}
        \item If $\mathcal{R}$ is $l_n$-positive, then every primitive harmonic $(n-1,1)$-form vanishes.
        \item If $\mathcal{R}$ is $l_n$-nonnegative, then every primitive harmonic $(n-1,1)$-form is parallel.
        \item Let $\kappa \leq 0$ and $D>0.$ There exists $C(n,\kappa D^2)>0$ such that the following holds. 
        
        If $\lambda_1 + \ldots + \lambda_{\floor{l_n}} + ( l_n - \floor{l_n}) \lambda_{\floor{l_n}+1} \geq l_n \kappa,$ $\Ric \geq (2n-1)\kappa g$ and $\operatorname{diam}(M)<D,$ then the vector space $\mathcal{PH}^{n-1,1}(M)$ of primitive harmonic $(n-1,1)$-forms has dimension at most
        \begin{align*}
        \hspace{8mm} \dim \mathcal{PH}^{n-1,1}(M) \leq \rank P^{n-1,1}M \cdot \exp  \left( C(n, \kappa D^2) \sqrt{-\kappa D^2 } \right).
        \end{align*}
    \end{enumerate}
\end{maintheorem}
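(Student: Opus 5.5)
Via the Bochner technique on the bundle $P^{n-1,1}M$: a harmonic form $\omega$ satisfies
\[
\Delta \omega = \nabla^*\nabla \omega + \mathrm{Ric}(\omega),
\]
where the curvature term can be written, following \cite{PetersenWinkHodgeNumbers}, as
\[
g(\mathrm{Ric}(\omega),\omega) = \sum_\alpha \lambda_\alpha \, |\mathcal{R}\text{-eigenvector } \Xi_\alpha \text{ acting on } \omega|^2 .
\]
Here $\{\Xi_\alpha\}$ is an orthonormal eigenbasis of $\mathcal{R}$ on $\Lambda^{1,1}_{\R}TM \cong \mathfrak{u}(n)$, acting on forms as derivations. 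Set $a_\alpha = |\Xi_\alpha \omega|^2 \geq 0$. The aim is to show that $\sum_\alpha \lambda_\alpha a_\alpha > 0$ under $l_n$-positivity whenever $\omega \neq 0$. By the standard lemma on $k$-positivity, it suffices to prove the weight estimate
\[
a_\alpha \leq \frac{1}{l_n}\sum_\beta a_\beta \quad \text{for every unit } \Xi \in \mathfrak{u}(n),
\]
that is, $\max_{|\Xi|=1} |\Xi\omega|^2 \leq \frac{1}{l_n} \sum_\beta |\Xi_\beta \omega|^2$ for $\omega \in P^{n-1,1}$. Given this, write $\lambda_\alpha \geq$ the $k$-averaged lower bound; summing $\lambda$'s against weights in $[0,1/l_n]$ of total mass $1$ gives at least $\frac{1}{l_n}(\lambda_1+\dots+\lambda_{\lfloor l_n\rfloor}+(l_n-\lfloor l_n\rfloor)\lambda_{\lfloor l_n\rfloor+1})$ times $\sum a_\beta$.

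The key step is the weight estimate. Since the paper emphasizes a \emph{new} understanding, I would exploit the isomorphism $\Lambda^{n-1,0}\cong \Lambda^{1,0}\otimes K$ via the Hodge star. Thus $P^{n-1,1}\cong$ the trace-free part of $T^{1,0}\otimes \Lambda^{0,1}\otimes K$, which is essentially trace-free $n\times n$ complex matrices $A$, twisted by the determinant. A unit $\Xi \in \mathfrak{u}(n)$, viewed as a skew-Hermitian matrix $X$, acts roughly by $A \mapsto XA + AX^{*}$-type terms plus $\operatorname{tr}(X)A$ from the $K$-twist. Compute $\sum_\beta |\Xi_\beta\omega|^2$ as a Casimir-type expression; it should be a constant times $|\omega|^2$, of order $n^2$ (the Casimir of this representation). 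Then bound $|\Xi\omega|^2 \leq c|\omega|^2$ by diagonalizing $X = i\,\mathrm{diag}(x_j)$ with $\sum x_j^2 = 1$, after which the action is diagonal on entries $A_{j\bar k}$ with eigenvalue depending on $x_j$, $x_k$ and $\sum x_l$. The worst case is a maximization of a quadratic expression over the sphere. The ratio Casimir$/\max$ should come out as $1+\binom n2$ for $n\geq4$, while for $n=3$ a different extremal configuration (e.g.\ $X$ proportional to the identity dominated by the trace term) gives $7/2$. I expect this optimization, and in particular correctly handling the primitivity constraint (trace-freeness of $A$) and the $K$-twist, to be the main obstacle. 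If a single uniform eigenvalue bound is too lossy, I would refine by using that only the trace-free part contributes, possibly decomposing $\omega$ by $X$-eigenspaces.

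With the pointwise inequality $g(\mathrm{Ric}(\omega),\omega) \geq \frac{1}{l_n}(\text{partial sum})\sum a_\beta$ in hand, the three parts follow in standard fashion. For (a), integrating $0=\int\langle\Delta\omega,\omega\rangle = \int |\nabla\omega|^2 + \langle\mathrm{Ric}(\omega),\omega\rangle$ forces $\sum a_\beta=0$, so $\omega$ is $\mathfrak{u}(n)$-invariant. The Casimir is positive on the nontrivial part, hence $\omega=0$ (note $P^{n-1,1}$ has no invariants for $n\geq 3$). For (b), nonnegativity gives $\nabla\omega=0$. For (c), the lower bound gives $\Delta|\omega|\geq C\kappa|\omega|$ in the barrier sense with the constant normalized using $\sum a_\beta \leq C_n|\omega|^2$. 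The Li-type estimate for subsolutions under $\Ric\geq(2n-1)\kappa$ and diameter bound then gives the stated dimension bound, as in \cite{PetersenWinkHodgeNumbers}.
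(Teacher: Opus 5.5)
\textbf{There is a genuine gap.} Your argument hinges on the weight estimate $\max_{|\Xi|=1}|\Xi\omega|^2\le \frac{1}{l_n}\sum_\beta|\Xi_\beta\omega|^2$, and this estimate is false.

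The sum $\sum_\beta|\Xi_\beta\omega|^2=\tr(\mathcal{A}_\omega)$ is not of order $n^2$: for primitive $(n-1,1)$-forms it equals $(3n-2)|\omega|^2$. On the other hand, take the primitive form $\omega=\iota_{Z_n}\Omega\wedge\overline{Z^n}$ and the unit element $\Xi=\frac{1}{\sqrt n}(I_{11}+\dots+I_{n-1,n-1}-I_{nn})$. Then $\Xi\omega=-\sqrt{-1}\sqrt{n}\,\omega$, so $|\Xi\omega|^2=n|\omega|^2$. Hence the best constant in your estimate is $\frac{3n-2}{n}=3-\frac{2}{n}<\frac{7}{2}\le l_n$. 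This route can only reproduce the earlier $(3-\frac{2}{n})$-positivity result, and refining the optimization over $X$ cannot change that.

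No argument that uses only the spectrum of $\mathcal{R}$ can work. With $P_\Xi$ the orthogonal projection onto $\R\Xi$ and $\frac{3n-2}{n}<c<l_n$, the operator $\id-c\,P_\Xi$ is $l_n$-positive, yet $\sum_\alpha\lambda_\alpha|\Xi_\alpha\omega|^2=(3n-2)-cn<0$ for $|\omega|=1$.

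Separately, primitivity does not correspond to trace-free matrices. One has $P^{n-1,1}\cong K\otimes\Sym^2T^{1,0}$: the coefficient matrix $(a_{ij})$ in $\sum a_{ij}\,\iota_{Z_i}\Omega\wedge\overline{Z^j}$ must be symmetric. So your model representation and its action are not the right ones either.

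\textbf{The missing idea is to use the first Bianchi identity.} Algebraic K\"ahler curvature operators are orthogonal to $\widehat{\Lambda^{2,2}_\R V^*}$. Hence $\tr(\mathcal{R}\mathcal{A}_\omega)=\tr(\mathcal{R}(\mathcal{A}_\omega+\hat\eta))$ for every real $(2,2)$-form $\eta$, and, unlike in the Riemannian case, $\tr\hat\eta$ can be positive. The paper proceeds as follows.
\begin{enumerate}
\item It brings $\omega$ into the normal form $e^{it}\sum_k a_k\,\iota_{Z_k}\Omega\wedge\overline{Z^k}$ by diagonalizing the complex symmetric coefficient matrix, and computes $\mathcal{A}_\omega$ explicitly.
\item It takes $\eta=\sum_{i<j}\bigl(2(a_i^2+a_j^2)-|\omega|^2-h_{ij}\bigr)\,e^i\wedge f^i\wedge e^j\wedge f^j$, with $h_{ij}\ge\max\{(a_i+a_j)^2-|\omega|^2,0\}$ and $\sum_{j\neq i}h_{ij}=|\omega|^2$. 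Such $h_{ij}$ exist for $n\ge 4$ by a weighted-degree realization (Farkas) argument.
\item This yields $0\le\mathcal{A}_\omega+\hat\eta\le 2|\omega|^2\id$ with trace $(n^2-n+2)|\omega|^2$, so the ratio is exactly $l_n$.
\item For $n=3$ the row-sum condition cannot be met; choosing $h_{ij}=\max\{(a_i+a_j)^2-|\omega|^2,0\}$ instead gives the ratio $\frac{7}{2}$.
\end{enumerate}
Once this pointwise bound is in place, your derivation of (a)--(c) goes through, with $\sum_\beta a_\beta$ replaced by $\tr(\mathcal{A}_\omega+\hat\eta)$.
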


We note that the Ricci curvature is bounded from below by the sum of the lowest $n$ eigenvalues of the K\"ahler curvature operator. To apply the techniques of P. Li \cite{LiSobolevConstant} and Gallot \cite{GallotSobolevEstimates} to obtain estimation results, a lower bound on the Ricci curvature is therefore assumed explicitly. \vspace{2mm}

We now explain the key idea for the proof of Theorems \ref{TheoremHodgeNumbers} -- \ref{PrimitiveFormTheorem}. Every harmonic form $\varphi \in \Lambda^{p,q}T^*M$ satisfies the Bochner formula
\begin{align*}
    \Delta \frac{1}{2} | \varphi |^2 = | \nabla \varphi |^2 + g( \Ric_L( \varphi), \overline{\varphi}).
\end{align*}
Thus, if $g( \Ric_L( \varphi), \overline{\varphi}) \geq 0,$ then the maximum principle implies that $\varphi$ is parallel. Following the ideas in \cite{WinkThorpeTrickForBochner}, we define $\mathcal{A}_{\varphi} \colon \Lambda^{1,1}_{\R}TM \to \Lambda^{1,1}_{\R}TM$ by 
\begin{align*}
    g(\mathcal{A}_{\varphi}(\alpha), \beta) = \operatorname{Re} g( \alpha \varphi, \overline{\beta \varphi}),
\end{align*}
where $\alpha \varphi, \beta \varphi$ denote the Lie algebra action of $\alpha, \beta \in \mathfrak{u}(TM) \cong \Lambda^{1,1}_{\R}TM$ on $\Lambda^{p,q}T^*M.$ 

The first key observation is that  
\begin{align*}
    g( \Ric_L(\varphi), \overline{\varphi}) = \tr( \mathcal{R} \mathcal{A}_{\varphi} ) = \tr( \mathcal{R} ( \mathcal{A}_{\varphi} + \hat{\eta}) ) 
\end{align*}
for every $\eta \in \Lambda^{2,2}_{\R}T^*M$ as the space of K\"ahler curvature operators is orthogonal to real $(2,2)$-forms. This allows us to establish in Corollary \ref{CharacterizationBoundsOnRicL} that $g( \Ric_L(\varphi), \overline{\varphi}) \geq 0$ on manifolds with $k$-nonnegative K\"ahler curvature operators provided that for every primitive $\varphi \in \Lambda^{p,q}T^*M$ there exists $\eta \in \Lambda^{2,2}_{\R}T^*M$ such that 
        \begin{align*}
        0 \leq \mathcal{A}_{\varphi} + \hat{\eta} 
        \leq
        \frac{\tr( \mathcal{A}_{\varphi}+\hat{\eta})}{k}  \id_{\Lambda^{1,1}_{\R}TM}.
        \end{align*}
        
The second key insight is that $\tr_{\Lambda^{1,1}_{\R}TM}( \hat{\eta})$ may be positive. This is different from the Riemannian case considered in \cite{WinkThorpeTrickForBochner}, where $\tr_{\Lambda^2TM}(\hat{\eta})=0$ for every $4$-form $\eta.$ This is the insight that allows us to prove vanishing results that are quadratic in the number of eigenvalues involved in Theorem \ref{TheoremHodgeNumbers}, since $\tr(\mathcal{A}_{\varphi}) = (3n-2)|\varphi|^2$ for primitive $(n-1,1)$-forms, see Proposition \ref{PropertiesActionOperator}.

In particular, for $n \geq 4$ we show in Theorem \ref{AdjustmentActionOperator}  that for every primitive $(n-1,1)$-form $\varphi$ there exists $\eta \in \Lambda^{2,2}_{\R}T^*M$ such that 
    \begin{align*}
        0 \leq \mathcal{A}_{\varphi} + \hat{\eta} \leq 2 | \varphi |^2 \id_{\Lambda^{1,1}_{\R}TM} \ \text{ and } \ \tr( \mathcal{A}_{\varphi} + \hat{\eta}) = (n^2-n+2)| \varphi|^2.
    \end{align*}
Thus, Corollary \ref{CharacterizationBoundsOnRicL} implies that $g( \Ric_L(\varphi), \overline{\varphi}) \geq 0$ if the K\"ahler curvature operator is $\frac{n^2-n+2}{2}$-nonnegative and in this case every primitive harmonic $(n-1,1)$-form is parallel. 

Example \ref{ExampleSharpCandidate} shows that the above choice of $\eta$ is optimal. Based on these examples, we propose the following conjectural curvature conditions for vanishing of primitive $(p,q)$-forms.

\begin{conjecture}
    \label{ProposedCurvatureConditions}
    Let $p, q \geq 1$ with $p+ q \leq n.$ 
    
    If the K\"ahler curvature operator of a closed complex $n$-dimensional K\"ahler manifold is $\frac{1}{2}\left((p+q)(n+1)-2pq-\delta_{|p-q|, 1} \right)$-positive, then every primitive harmonic $(p,q)$-form vanishes. 
\end{conjecture}

%Note that $\delta_{|p-q|,1} =1$ if and only if $|p-q|=1.$
Theorem \ref{PrimitiveFormTheorem} covers Conjecture \ref{ProposedCurvatureConditions} for $(p,q)=(n-1,1)$ and \cite[Theorem B]{PetersenWinkHodgeNumbers} confirms it in the case of primitive $(1,1)$-forms. A positive resolution of Conjecture \ref{ProposedCurvatureConditions} would imply

\begin{conjecture}
\label{CPnConjecture}
    If $(M,g)$ is a closed K\"ahler manifold of complex dimension $n$ with $n$-positive K\"ahler curvature operator, then $M$ is a real cohomology $\mathbb{CP}^n.$
\end{conjecture}

Note that Theorem \ref{MainTheoremCPn} covers the case $n=3.$ \vspace{2mm}

\textbf{AI disclosure.} The author used OpenAI’s ChatGPT as an assistive tool in the development of the paper. The author takes full responsibility for all mathematical arguments and the manuscript was entirely written by the author. \vspace{2mm}

\textbf{Acknowledgments.} I thank Fabio Ricci for comments on a previous version of the paper, in particular with regard to Example \ref{ExampleSharpCandidate}. 

The author's research is partially supported by travel grant SFI-MPS-TSM-00026032 funded by the Simons Foundation International and administered by the Simons Foundation.

\section{Preliminaries}

Let $(V,g)$ be a Euclidean vector space. We identify $\Lambda^2V$ with $\mathfrak{so}(V)$ via
\begin{align*}
    (x \wedge y)z = g(x,z)y - g(y,z)x
\end{align*}
and consider the inner product $\left\langle A, B \right\rangle = - \frac{1}{2} \tr (AB)$ on $\mathfrak{so}(V).$ Accordingly, if $\{ e_i \}$ is an orthonormal basis for $V$, then $\{ e_{i_1} \wedge \ldots \wedge e_{i_p} \}_{1 \leq i_1 < \ldots < i_p \leq \dim V}$ is an orthonormal basis for $\Lambda^pV.$

A subgroup $G$ of the orthogonal group $O(V)$ acts on $\Lambda^pV$ by
\begin{align*}
    g \cdot x_1 \wedge \ldots \wedge x_p = gx_1 \wedge \ldots \wedge gx_p.
\end{align*}
The induced action on $p$-forms $\Lambda^p V^{*}$ is
\begin{align*}
    (g \cdot \varphi)(X_1, \ldots, X_p) = \varphi(g^{-1}X_1, \ldots, g^{-1}X_p)
\end{align*}
and the corresponding Lie algebra action of $\mathfrak{g} \subset \mathfrak{so}(V)$ is 
\begin{align*}
    (\Xi \varphi)(X_1, \ldots, X_p) = - \sum_{i=1}^p \varphi(X_1, \ldots, \Xi X_i, \ldots, X_p).
\end{align*}

From now on, let $(V,g,J)$ be a $2n$-dimensional Euclidean vector space with compatible almost complex structure $J.$ Recall that $V_{\C}=V \otimes_{\R} \C = V^{1,0} \oplus V^{0,1}$ and $\Lambda^{p,q} V_\C^* = \Lambda^p (V^{1,0})^* \otimes \Lambda^q(V^{0,1})^*.$ The real form
\begin{align*}
    \Lambda^{p,p}_\R V^* \subseteq  \Lambda^{p,p} V_\C^* 
\end{align*}
consists of all $(p,p)$-forms invariant under complex conjugation. Note that the identification $\Lambda^2V \cong \mathfrak{so}(V)$ induces the isomorphism
\begin{align*}
    \Lambda^{1,1}_\R V^* \cong \mathfrak{u}(V) = \left\lbrace L \in \mathfrak{so}(V) \ \vert \ L  \circ J = J \circ L, \ g(L \cdot, \cdot ) + g( \cdot, L \cdot ) = 0 \right\rbrace.
\end{align*}

It is shown in \cite{AlekseevskiRiemannExceptionalHolonomy}, \cite[Proposition 2.1]{NagyConnectionsWithTorsion} or \cite[Section 1.2]{AVCGHamiltonian2Forms} that the vector space of self-adjoint operators on $\mathfrak{u}(V)$ has the orthogonal $U(n)$-invariant decomposition
\begin{align}
\label{OrthogonalDecompositionSymUV}
    \Sym^2(\mathfrak{u}(V)) = \Sym_B^2(\mathfrak{u}(V)) \oplus \widehat{\Lambda^{2,2}_\R V^*},
\end{align}
where $\Sym_B^2({\mathfrak{u}(V)}) = \ker \left( L_1 \otimes L_2 + L_2 \otimes L_1 \mapsto L_1 \wedge L_2 \right)$ is the space of algebraic K\"ahler curvature operators and $\hat{\eta} \colon \Lambda^{1,1}_{\R} V \to \Lambda^{1,1}_{\R}V$ is the operator induced by $\eta \in \Lambda^{2,2}_\R V^*$ via $g(\hat{\eta}(x \wedge y), z \wedge w) = \eta(x,y,z,w).$

    \begin{remark}
    \normalfont
For an algebraic curvature tensor $R(X,Y,Z,W)$ we define the induced algebraic curvature operator by
\begin{align*}
    \mathcal{R} \colon \Lambda^2V \to \Lambda^2V, \ g(\mathcal{R}(X \wedge Y), Z \wedge W) = R(X,Y,Z,W).
\end{align*}

If $R$ is K\"ahler, $R(JX,JY,Z,W)=R(X,Y,JZ,JW)=R(X,Y,Z,W),$ then the orthogonal complement of $\Lambda^{1,1}_{\R}V \subseteq \Lambda^2V$ is in the kernel of $\mathcal{R}$ and $\mathcal{R}_{|\mathfrak{u}(V)} \colon \mathfrak{u}(V) \to \mathfrak{u}(V).$ We extend $R$ complex multi-linearly and obtain a self-adjoint operator $\mathcal{R} \colon \mathfrak{u}_{\C}(V) \to  \mathfrak{u}_{\C}(V).$ The map
\begin{align*}
    &\mathfrak{u} (V) \to \mathfrak{u}_{\C} (V) \cong \mathfrak{gl}_{\C}(V^{1,0}) \\
    & \hspace{5mm} L  \mapsto L_{\C} \mapsto  (L_{\C})_{|V^{1,0}}
\end{align*}
induces the first isomorphism in
\begin{align*}
    \mathfrak{u}(V) & \cong \{ L \in \operatorname{End}(V^{1,0}) \ | \ g(L(X), \overline Y) = - g(X, \overline{L(Y)}) \} \\
    & \cong  \{ L \in \operatorname{End}(V^{1,0}) \ | \ g(L(X), \overline Y) = + g(X, \overline{L(Y)}) \} 
\end{align*}
and the second isomorphism is given by $L \mapsto \sqrt{-1}L.$ With the isomorphisms $\mathfrak{gl}_{\C}(V^{1,0}) \cong \Lambda^{1,1}V,$ $X \otimes g( \cdot , \overline{Y}) \mapsto - X \wedge \overline{Y}$ and $\mathfrak{gl}_{\C}(V^{1,0}) = V^{1,0} \otimes (V^{1,0})^*$ we obtain the decomposition
\begin{align*}
    \Sym^2(\Lambda^{1,1}V) & \cong \Sym^2(\mathfrak{u}_{\C}(V)) \cong \Sym^2(\mathfrak{gl}_{\C}(V^{1,0})) \\
    & \cong \mathfrak{gl}_{\C}( \Sym^2(V^{1,0})) \oplus \mathfrak{gl}_{\C}( \Lambda^2 V^{1,0}) \\
    & \cong \mathfrak{gl}_{\C}( V^{1,0} \odot V^{1,0}) \oplus \Lambda^{2,2} V
\end{align*}
where $X \odot Y = X \otimes Y + Y \otimes X.$ By taking the real form, algebraic K\"ahler curvature operators correspond to self-adjoint endomorphisms on $ V^{1,0} \odot V^{1,0},$ see also \cite{CalabiVesentiniCompactLocallySymmetricKaehler, Sitaramayya}. Under these isomorphisms, $\mathcal{R}$ corresponds to the Calabi curvature operator $C \colon  V^{1,0} \odot V^{1,0} \to  V^{1,0} \odot V^{1,0}$ defined by
\begin{align*}
    g( \mathcal{R}(X \wedge \overline{Y}), \overline{Z \wedge \overline{W}}) = R(X,\overline{Y},\overline{Z}, W) = \frac{1}{4} g( C( X \odot W), \overline{Y} \odot \overline{Z} ).
\end{align*}
%Note that the conventions for the curvature tensor in this paper agree with the conventions in \cite{BNPSWCalabiCurvatureOperator} whereas \cite{ElHLiNienhausPetersenStanfieldWinkCalabiCPn} uses the other convention.
\end{remark}

\section{The Lichnerowicz Laplacian on $(p,q)$-forms}

For a closed K\"ahler manifold $(M,g)$ the Hodge Laplacian agrees with the Lichnerowicz Laplacian $\Delta = \nabla^*\nabla + \Ric_L$ and preserves the $U(n)$-irreducible modules of $\Lambda^{p,q}T^*M.$ Moreover, every harmonic form $\varphi$ satisfies
\begin{align*}
    \Delta \frac{1}{2} | \varphi |^2 = | \nabla \varphi |^2 + g( \Ric_L(\varphi), \overline{\varphi}).
\end{align*}
The basic idea of the Bochner technique is the observation that, by the maximum principle, $\varphi$ is parallel provided $g(\Ric_L(\varphi),\overline{\varphi}) \geq 0.$ Here, the curvature term $\Ric_L$ can be computed in terms of the K\"ahler curvature operator $\mathcal{R} \colon \Lambda^{1,1}_{\R}TM \to \Lambda^{1,1}_{\R}TM$. Specifically, if $\{ \Xi_{\alpha} \}$ is an orthonormal eigenbasis for $\mathcal{R}$ with corresponding eigenvalues $\{ \lambda_{\alpha} \},$ then
\begin{align}
\label{CurvatureTermViaKaehlerCurvatureOperator}
    g( \Ric_L(\varphi), \overline{\varphi}) = \sum_{\alpha} \lambda_{\alpha} | \Xi_{\alpha} \varphi |^2,
\end{align}
see \cite[Proposition 1.6]{PetersenWinkHodgeNumbers}. In particular, the curvature term of the Lichnerowicz Laplacian depends on the Lie algebra action of the holonomy algebra $\mathfrak{u}(n)$ on $(p,q)$-forms. Following the ideas in \cite{WinkThorpeTrickForBochner}, we define

\begin{definition}
\label{DefinitionActionOperator}
    Let $(V,g,J)$ be a $2n$-dimensional Euclidean vector space with compatible almost complex structure. The {\em action operator} $\mathcal{A}_{\varphi} \colon \Lambda^{1,1}_{\R}V \to \Lambda^{1,1}_{\R} V$ associated to $\varphi \in \Lambda^{p,q}V_{\C}^*$ is defined by
\begin{align*}
    g(\mathcal{A}_{\varphi}(\alpha), \beta) = \operatorname{Re} g( \alpha \varphi, \overline{\beta \varphi})
\end{align*}
for all $\alpha, \beta \in \Lambda^{1,1}_{\R}V \cong \mathfrak{u}(V).$ 
\end{definition}

Let $\omega( \cdot , \cdot ) = g( J \cdot, \cdot)$ denote the K\"ahler form, $L\varphi = \omega \wedge \varphi$ be the Lefschetz operator and let $\Lambda$ denote its adjoint. A $(p,q)$-form $\varphi$ is called {\em primitive} if $\Lambda \varphi = 0.$ 

The action operator has the following basic properties.

\begin{proposition}
\label{PropertiesActionOperator}
\text{}%The following hold.
\begin{enumerate}
    \item The action operator associated to $\varphi \in \Lambda^{p,q}V^*$ is self-adjoint, nonnegative and satisfies
\begin{align*} 
    \mathcal{A}_{U \cdot \varphi} (\alpha)= U \cdot \mathcal{A}_{\varphi}\left(U^{-1} \cdot \alpha \right)
\end{align*}
for all $U \in U(V)$ and $\alpha \in \Lambda^{1,1}_{\R}V.$
\item $\mathcal{A}_{e^{it} \varphi} = \mathcal{A}_{\varphi}$ for all $\varphi \in \Lambda^{p,q}V^*$ and $t \in \R.$
\item If $\varphi \in \Lambda^{p,q}V^*$ is primitive, then 
\begin{align*}
    \tr( \mathcal{A}_{\varphi} ) = (2pq+(p+q)(n+1-(p+q)))|\varphi|^2.
\end{align*}
\end{enumerate}
\end{proposition}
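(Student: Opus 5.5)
Parts (a) and (b) follow directly from the definition; part (c) is a Casimir-type computation.

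\emph{Self-adjointness and nonnegativity.} The Hermitian inner product satisfies $g(\alpha\varphi,\overline{\beta\varphi}) = \overline{g(\beta\varphi,\overline{\alpha\varphi})}$. Taking real parts gives $g(\mathcal{A}_\varphi(\alpha),\beta)=g(\alpha,\mathcal{A}_\varphi(\beta))$. Setting $\beta=\alpha$ gives $g(\mathcal{A}_\varphi(\alpha),\alpha)=|\alpha\varphi|^2\ge 0$.

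\emph{Equivariance.} The $U(V)$-action is compatible with the Lie algebra action, i.e. $(U\cdot\alpha)(U\cdot\varphi)=U\cdot(\alpha\varphi)$, where $U\cdot\alpha=U\alpha U^{-1}$ is the adjoint action. The action of $U$ also preserves $g$ and commutes with conjugation. Hence
\begin{align*}
g(\mathcal{A}_{U\cdot\varphi}(\alpha),\beta)=\operatorname{Re} g\bigl(U\cdot((U^{-1}\cdot\alpha)\varphi),\overline{U\cdot((U^{-1}\cdot\beta)\varphi)}\bigr) = g\bigl(\mathcal{A}_\varphi(U^{-1}\cdot\alpha),U^{-1}\cdot\beta\bigr),
\end{align*}
which is the stated formula because $U$ acts isometrically on $\Lambda^{1,1}_\R V$.

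\emph{Phase invariance.} Part (b) holds because the Lie algebra action is complex linear. Thus $\alpha(e^{it}\varphi)=e^{it}\alpha\varphi$, and the factors $e^{it}$ and $\overline{e^{it}}$ cancel in the Hermitian product.

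\emph{Trace formula.} For (c), pick an orthonormal basis $\{\Xi_a\}$ of $\mathfrak{u}(V)$. Then $\tr(\mathcal{A}_\varphi)=\sum_a|\Xi_a\varphi|^2=-g(\mathrm{Cas}\,\varphi,\overline{\varphi})$, where $\mathrm{Cas}=\sum_a\Xi_a^2$ is the Casimir of $\mathfrak{u}(n)$ acting on $\Lambda^{p,q}V^*$. I will split $\mathfrak{u}(V)=\mathfrak{su}(V)\oplus\R J$.

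For the centre: with the paper's normalization $|J|^2=n$, and $J$ acts on $(p,q)$-forms by multiplication by $\pm\sqrt{-1}(p-q)$. So the unit vector $J/\sqrt n$ contributes $\frac{(p-q)^2}{n}|\varphi|^2$.

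For the $\mathfrak{su}(n)$-part, I will not use highest-weight theory. Instead I will compute in a unitary basis $\{e_j\}$ of $V^{1,0}$. The elements $\frac{1}{\sqrt2}(e_j\wedge\bar e_k-e_k\wedge\bar e_j)$ and $\frac{\sqrt{-1}}{\sqrt2}(e_j\wedge\bar e_k+e_k\wedge\bar e_j)$ for $j<k$, together with $\sqrt{-1}\,e_j\wedge\bar e_j$, give an orthonormal basis of all of $\mathfrak{u}(V)$. Summing over this basis directly avoids splitting off $J$ at all.

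The sum $\sum_a|\Xi_a\varphi|^2$ then becomes, up to a normalization constant, $\sum_{j,k}|E_{jk}\varphi|^2$. Here $E_{jk}$ replaces a holomorphic index $k$ by $j$ and, with opposite sign and conjugate role, an antiholomorphic index $j$ by $k$. I will expand $\varphi=\sum\varphi_{I\bar J}\,e^I\wedge\bar e^J$ and compute $|E_{jk}\varphi|^2$. There are two kinds of terms.

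The \emph{diagonal terms} contribute a count. For a fixed index in $I$, there are $n-p+1$ admissible targets (the $n-p$ indices not in $I$, plus the index itself), and similarly $n-q+1$ for $\bar J$. This gives $p(n-p+1)+q(n-q+1)$. The cross-term between the holomorphic and antiholomorphic parts, over diagonal pairs, produces a further $2pq$ correction.

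The \emph{off-diagonal terms} are the cross terms in which a holomorphic slot and an antiholomorphic slot are contracted against each other. These assemble into expressions of the form $|\iota\,\Lambda\varphi|^2$-type quantities, i.e. contractions of $\varphi$ with $\omega$, and they vanish exactly because $\Lambda\varphi=0$.

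Collecting the terms should give $p(n+1-p)+q(n+1-q)+2pq=2pq+(p+q)(n+1-(p+q))+2pq$. I must check the $2pq$ bookkeeping carefully, because the claimed formula is $2pq+(p+q)(n+1-p-q)$ and $p(n+1-p)+q(n+1-q)=(p+q)(n+1-p-q)+2pq$, so the two should match if the correction terms combine correctly.

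\emph{Main obstacle.} The hard step is this sign and normalization bookkeeping in (c), specifically showing that all non-diagonal contributions are multiples of $\Lambda\varphi$. As a fallback, I will use \eqref{CurvatureTermViaKaehlerCurvatureOperator} with the curvature operator of $\mathbb{CP}^n$. It is an explicit combination of $\id$ and the projection onto $\R\omega$, and the eigenvalue of its $\Ric_L$ on primitive harmonic $(p,q)$-forms is known from the Weitzenböck formula. Subtracting the $\omega$-contribution $|J\varphi|^2/n$ would then yield $\tr(\mathcal{A}_\varphi)$.
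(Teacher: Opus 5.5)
\textbf{There is a genuine gap in (c); (a) and (b) are fine.} Your arguments for (a) and (b) are correct and are what the paper means by ``follow from the definition''. For (c) the paper does no computation; it quotes \cite[Proposition 3.2]{PetersenWinkHodgeNumbers}. Your Casimir route for (c) could work, but the bookkeeping you sketch is wrong. Taken literally, it gives $4pq+(p+q)(n+1-p-q)$, not the claimed value.

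The mismatch you half-noticed is decisive. Your count of pure terms, $p(n+1-p)+q(n+1-q)$, already equals $2pq+(p+q)(n+1-(p+q))$, so there can be no extra $+2pq$. Moreover, neither half of your split of the mixed (holomorphic--antiholomorphic) terms is right on its own:
\begin{enumerate}
\item For $j=k$ the mixed terms are not a constant. Writing $\varphi=\sum\varphi_{I\bar J}Z^I\wedge\overline{Z^J}$, one gets $\sum_i|I_{ii}\varphi|^2=(p+q)|\varphi|^2-2\sum_{I,J}|\varphi_{I\bar J}|^2\,|I\cap J|$. So this mixed part is $\le 0$ and depends on $\varphi$.
\item For $j\neq k$ the mixed terms do not vanish by primitivity. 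Take the primitive form $\varphi=\frac{1}{\sqrt2}(Z^1\wedge\overline{Z^1}-Z^2\wedge\overline{Z^2})$. The $I_{ii}$ contribute $0$ instead of the count $2$, while $R_{12},I_{12}$ contribute $4$ instead of $2$. The formula survives only because these two mixed contributions, $-2$ and $+2$, cancel.
\end{enumerate}

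\textbf{What is true, and how to fix (c).} Only the total mixed term is governed by $\Lambda$. Split the action of $E_{jk}$ as $A_{jk}+B_{jk}$, the derivations on the holomorphic and on the antiholomorphic factor. Then $\sum_{j,k}A_{jk}^*B_{jk}$ factors as $-L'\Lambda'$, where $\Lambda'$ contracts one holomorphic slot against one antiholomorphic slot (so $|\Lambda'\varphi|=|\Lambda\varphi|$) and $L'=(\Lambda')^*$. Hence
\[
\tr(\mathcal{A}_\varphi)=\bigl(p(n+1-p)+q(n+1-q)\bigr)|\varphi|^2-2|\Lambda\varphi|^2 \quad\text{for all }\varphi\in\Lambda^{p,q}V^*,
\]
which gives (c).

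A quicker fix avoids mixed terms altogether. $P^{p,q}$ is an irreducible $U(n)$-module, so $-g(\mathrm{Cas}\,\varphi,\overline{\varphi})$ is a fixed multiple of $|\varphi|^2$ on it. Evaluate on $Z^1\wedge\cdots\wedge Z^p\wedge\overline{Z^{p+1}}\wedge\cdots\wedge\overline{Z^{p+q}}$ (for $p+q\le n$; otherwise $P^{p,q}=0$). This form shares no index between its holomorphic and antiholomorphic parts, so there are no mixed terms and your count is the answer. This matches the spectrum listed in Example \ref{PWMaximizer}.

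Your $\mathbb{CP}^n$ fallback does not close the gap. $\mathbb{CP}^n$ has no nonzero primitive harmonic $(p,q)$-forms for $p+q\ge 1$. The pointwise value of its $\Ric_L$ on $P^{p,q}$ is this same Casimir eigenvalue, so you would be importing the result rather than proving it.
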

\begin{proof} (a), (b) follow from the definition of the action operator and (c) is computed in \cite[Proposition 3.2]{PetersenWinkHodgeNumbers}.
\end{proof}

The key tool for obtaining estimates on the curvature term of the Lichnerowicz Laplacian is the following basic observation.

\begin{proposition}
\label{CurvatureTermBochnerCharacterization}
If $\varphi \in \Lambda^{p,q}V^*$ and  $\mathcal{R} \in \Sym^2_B(\mathfrak{u}(V)),$ then
\begin{align*}
     g( \Ric_L(\varphi), \overline{\varphi}) = \tr( \mathcal{R} \mathcal{A}_{\varphi}) = \tr (\mathcal{R} (\mathcal{A}_{\varphi} + \hat{\eta}))
    \end{align*}
for every $\eta \in \Lambda^{2,2}_{\R} V^{*}.$
\end{proposition}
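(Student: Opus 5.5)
The first equality will follow from formula \eqref{CurvatureTermViaKaehlerCurvatureOperator} by a trace computation, and the second from the orthogonal decomposition \eqref{OrthogonalDecompositionSymUV}.

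For the first equality, I would choose an orthonormal eigenbasis $\{\Xi_\alpha\}$ of $\mathcal{R}$ on $\Lambda^{1,1}_{\R}V \cong \mathfrak{u}(V)$ with eigenvalues $\lambda_\alpha$. Then
\begin{align*}
\tr(\mathcal{R}\mathcal{A}_{\varphi}) = \sum_\alpha g(\mathcal{A}_\varphi(\mathcal{R}\Xi_\alpha),\Xi_\alpha) = \sum_\alpha \lambda_\alpha\, g(\mathcal{A}_\varphi(\Xi_\alpha),\Xi_\alpha).
\end{align*}
By Definition \ref{DefinitionActionOperator},
\begin{align*}
g(\mathcal{A}_\varphi(\Xi_\alpha),\Xi_\alpha)=\operatorname{Re} g(\Xi_\alpha\varphi,\overline{\Xi_\alpha\varphi}) = |\Xi_\alpha\varphi|^2 .
\end{align*}
This uses that $g(\psi,\overline{\psi})=|\psi|^2$ is real for the Hermitian extension of $g$. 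Comparing with \eqref{CurvatureTermViaKaehlerCurvatureOperator}, i.e.\ \cite[Proposition 1.6]{PetersenWinkHodgeNumbers}, gives $g(\Ric_L(\varphi),\overline{\varphi}) = \tr(\mathcal{R}\mathcal{A}_\varphi)$.

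For the second equality, linearity of the trace reduces it to showing $\tr(\mathcal{R}\hat\eta)=0$. The pairing $\langle S,T\rangle = \tr(ST)$ on self-adjoint operators of $\mathfrak{u}(V)$ is the natural inner product on $\Sym^2(\mathfrak{u}(V))$. With respect to this inner product, \eqref{OrthogonalDecompositionSymUV} states that $\Sym^2_B(\mathfrak{u}(V))$ and $\widehat{\Lambda^{2,2}_{\R}V^*}$ are orthogonal. Since $\mathcal{R}\in \Sym^2_B(\mathfrak{u}(V))$ and $\hat\eta \in \widehat{\Lambda^{2,2}_\R V^*}$, we get $\tr(\mathcal{R}\hat\eta)=0$.

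The only point needing care is that the orthogonality in the cited decomposition is with respect to the trace inner product. Up to a positive constant, this is the unique $U(n)$-invariant inner product induced from $\langle\cdot,\cdot\rangle$ on $\mathfrak{u}(V)$, so orthogonality is unambiguous. If one wants to verify it directly, I would proceed as follows. Write $\tr(\mathcal{R}\hat\eta)=\sum_{a,b} R(e_a,e_b,e_b,e_a)$-type contractions of $R$ against $\eta$, summing over a basis of pairs. The full antisymmetry of $\eta$ turns this into a pairing of $\eta$ with the totally antisymmetrized part of $R$. That part vanishes by the first Bianchi identity, which is exactly the defining condition $L_1\otimes L_2+L_2\otimes L_1\mapsto L_1\wedge L_2$ being zero on $\Sym^2_B$.
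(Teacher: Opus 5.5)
Your proposal is correct and follows the paper's proof. The first equality is formula \eqref{CurvatureTermViaKaehlerCurvatureOperator} read off in an eigenbasis of $\mathcal{R}$, and the second is $\tr(\mathcal{R}\hat{\eta})=0$ from the orthogonality in \eqref{OrthogonalDecompositionSymUV}.

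One caveat on your side remark: the trace pairing is not, up to scale, the unique $U(n)$-invariant inner product on $\Sym^2(\mathfrak{u}(V))$. Both summands contain a trivial $U(n)$-summand, e.g.\ the curvature operator of $\mathbb{CP}^n$ and $\widehat{\omega\wedge\omega}$, so Schur's lemma alone does not force orthogonality. Your direct check does settle it, though: $\tr\bigl((L_1\otimes L_2+L_2\otimes L_1)\hat{\eta}\bigr)=2\eta(L_1\wedge L_2)$, so $\tr(\mathcal{R}\hat{\eta})$ equals $2\eta$ evaluated on the Bianchi image of $\mathcal{R}$, which vanishes by the very definition of $\Sym^2_B(\mathfrak{u}(V))$.
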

\begin{proof}
    The first equality is exactly \eqref{CurvatureTermViaKaehlerCurvatureOperator} and orthogonality of the decomposition in \eqref{OrthogonalDecompositionSymUV} says that $\tr( \mathcal{R} \hat{\eta} ) = 0$ for every $\mathcal{R} \in \Sym^2_B(\mathfrak{u}(V))$ and $\eta \in \Lambda^4V^*.$
\end{proof}

As a consequence of Proposition \ref{CurvatureTermBochnerCharacterization} and \cite[Theorem 2.7]{WinkThorpeTrickForBochner} we obtain 

\begin{corollary}
\label{CharacterizationBoundsOnRicL}
    The following are equivalent:
    \begin{enumerate}
        \item Every $k$-nonnegative $\mathcal{R} \in \Sym^2_B(\mathfrak{u}(V))$  satisfies $g(\Ric_L(\varphi),\overline{\varphi}) \geq 0$ for all primitive $\varphi \in \Lambda^{p,q}V^*.$
        \item For every primitive $\varphi \in \Lambda^{p,q}V^*$ there exists $\eta \in \Lambda^{2,2}_{\R} V^{*}$ such that 
        \begin{align*}
        0 \leq \mathcal{A}_{\varphi} + \hat{\eta} 
        \leq
        \frac{\tr( \mathcal{A}_{\varphi}+\hat{\eta})}{k} \id_{\Lambda^{1,1}_{\R}V}.
        \end{align*}
    \end{enumerate}
\end{corollary}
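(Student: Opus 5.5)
The equivalence rests on Proposition \ref{CurvatureTermBochnerCharacterization} together with a convex duality statement, which is essentially \cite[Theorem 2.7]{WinkThorpeTrickForBochner}. Let $N=n^2=\dim \Lambda^{1,1}_{\R}V$. Consider the closed convex cone
\begin{align*}
\mathcal{C}_k = \left\{ \mathcal{R} \in \Sym^2(\mathfrak{u}(V)) \ \vert \ \mathcal{R} \text{ is } k\text{-nonnegative} \right\}
\end{align*}
and its dual cone $\mathcal{C}_k^* = \{ S \ \vert \ \tr(\mathcal{R}S) \geq 0 \text{ for all } \mathcal{R} \in \mathcal{C}_k\}$. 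The first step is to identify this dual cone explicitly:
\begin{align*}
\mathcal{C}_k^* = \left\{ S \in \Sym^2(\mathfrak{u}(V)) \ \vert \ 0 \leq S \leq \tfrac{\tr S}{k}\id \right\}.
\end{align*}
The inclusion $\supseteq$ follows by diagonalising $S$ with eigenvalues $0\le s_i\le \tr S/k$. Minimising $\sum \lambda_i s_i$ over such $s_i$ against ordered $\lambda_i$ is a linear program, and its minimum is $\tfrac{\tr S}{k}$ times the $k$-partial sum, hence is nonnegative. The inclusion $\subseteq$ follows by testing against suitable elements of $\mathcal{C}_k$:
\begin{enumerate}
\item the projections $\mathcal{R}=\pr_x$ plus $\id$-multiples, which force $S \geq 0$;
\item $\mathcal{R} = \id - k\,\pr_x$ for a unit vector $x$, which is $k$-nonnegative because the sum of its lowest $k$ eigenvalues (weighted) is $k-k=0$. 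This gives $\tr S - k\, g(Sx,x) \geq 0$.
\end{enumerate}

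Next I use the orthogonal decomposition \eqref{OrthogonalDecompositionSymUV}. Let $\pi$ denote the orthogonal projection onto $\Sym^2_B(\mathfrak{u}(V))$. Statement (a) says that $\mathcal{A}_{\varphi}$ pairs nonnegatively with $\mathcal{C}_k \cap \Sym^2_B$, via $\tr(\mathcal{R}\mathcal{A}_\varphi)$ from Proposition \ref{CurvatureTermBochnerCharacterization}. The direction (b)$\Rightarrow$(a) is then immediate. For $\mathcal{R}\in \mathcal{C}_k\cap\Sym^2_B$ we have $\tr(\mathcal{R}\mathcal{A}_\varphi)=\tr(\mathcal{R}(\mathcal{A}_\varphi+\hat\eta))\ge 0$, since $\mathcal{A}_\varphi+\hat\eta\in\mathcal{C}_k^*$.

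For (a)$\Rightarrow$(b), I apply the standard duality $(\mathcal{C}\cap W)^* = \overline{\mathcal{C}^* + W^\perp}$ with $W=\Sym^2_B$ and $W^\perp = \widehat{\Lambda^{2,2}_\R V^*}$. This gives $\mathcal{A}_\varphi \in \overline{\mathcal{C}_k^* + \widehat{\Lambda^{2,2}_{\R}V^*}}$, so it remains to remove the closure.

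Removing the closure is the main obstacle. The sum $\mathcal{C}_k^*+W^\perp$ is closed if $\mathcal{C}_k$ meets $W$ in its interior; this is a Slater-type condition. Here $\id$ lies in $\Sym^2_B$, since it is the curvature operator of $\mathbb{CP}^n$ up to adding a $(2,2)$-form. To be careful: the curvature operator of $\mathbb{CP}^n$ is $\id+\widehat{\omega^2}$-like, which is still positive, so some element of $\Sym^2_B$ is in the interior of $\mathcal{C}_k$. An alternative route is to show directly that $\mathcal{C}_k^*$ is a cone with compact base $\{\tr S=1\}$ and that $\tr$ is bounded on $\{S+\hat\eta\}$ in terms of the pairing with that interior point. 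Then any limit of $S_j+\hat\eta_j$ with $S_j\in\mathcal{C}_k^*$ has bounded $S_j$, and after passing to a subsequence the $\hat\eta_j$ converge as well. Applying this to each primitive $\varphi$ yields (b), with the $\eta$ depending on $\varphi$.
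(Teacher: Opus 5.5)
Your proposal is correct and is essentially the paper's proof. The paper simply combines Proposition \ref{CurvatureTermBochnerCharacterization} with the cone-duality result \cite[Theorem 2.7]{WinkThorpeTrickForBochner}, which you reprove via the dual of the $k$-nonnegative cone and a Slater-type closedness argument, using the positive definite K\"ahler curvature operator of $\mathbb{CP}^n$ as an interior point of $\mathcal{C}_k$ lying in $\Sym^2_B(\mathfrak{u}(V))$.

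Two small repairs are needed:
\begin{enumerate}
\item $\id$ itself is \emph{not} in $\Sym^2_B(\mathfrak{u}(V))$. Otherwise $\tr\hat{\eta}=\langle \hat{\eta},\id\rangle$ would vanish for every $\eta$, contrary to the paper's key observation. Drop that first claim and keep only the $\mathbb{CP}^n$ operator $\id+J\otimes J$ with $J=\sum_i I_{ii}$, whose eigenvalues are $1$ and $n+1$.
\item For the inclusion $\supseteq$, work in an orthonormal eigenbasis $v_i$ of $\mathcal{R}$ rather than of $S$. Then $\tr(\mathcal{R}S)=\sum_i \lambda_i\, g(Sv_i,v_i)$ with $0\leq g(Sv_i,v_i)\leq \tr S/k$ and $\sum_i g(Sv_i,v_i)=\tr S$, and your linear-programming argument applies verbatim.
\end{enumerate}
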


\begin{remark}
\label{RemarkPositivityEstimation}
    \normalfont
    The key computation that shows (b) implies (a) is 
\begin{align*}
    g(\Ric_L( \varphi), \overline{\varphi}) = \tr( \mathcal{R}(\mathcal{A}_{\varphi} + \hat{\eta})) \geq \frac{\tr(\mathcal{A}_{\varphi}+ \hat{\eta})}{k} \left( \sum_{\alpha=1}^{\floor{k}} \lambda_{\alpha} + (k-\floor{k}) \lambda_{\floor{k}+1} \right),
\end{align*}
    see \cite[Lemma 2.5 (a)]{WinkThorpeTrickForBochner}. 
    
    Suppose that condition (b) in Corollary \ref{CharacterizationBoundsOnRicL} is satisfied and $\eta=\eta_{\varphi}$ can be chosen so that $\tr( \mathcal{A}_{\varphi} + \widehat{\eta_{\varphi}})=c(p,q,n)|\varphi|^2$ for some $c(p,q,n)>0$ and every primitive $\varphi \in \Lambda^{p,q}V^*.$ Then the following hold.
    \begin{enumerate}
        \item If $\mathcal{R} \in \Sym^2_B(\mathfrak{u}(V))$ is $k$-positive, then $g(\Ric_L(\varphi), \overline{\varphi}) > 0$ for every primitive $(p,q)$-form $\varphi \neq 0.$
        \item If $\mathcal{R} \in \Sym^2_B(\mathfrak{u}(V))$ with $\lambda_1 + \ldots + \lambda_{\floor{k}} + (k-\floor{k}) \lambda_{\floor{k}+1} \geq k \kappa,$ then $g(\Ric_L(\varphi),\overline{\varphi}) \geq \kappa c(p,q,n)|\varphi|^2$.
    \end{enumerate}

\end{remark}

\begin{remark}
    \normalfont
    The estimate $| L \varphi |^2 \leq (p+q) |L|^2|\varphi|^2$ in \cite[Proposition 3.4]{PetersenWinkHodgeNumbers} for primitive $(p,q)$-forms $\varphi$ shows that condition (b) is satisfied with $\eta=0$ and $k=\frac{2pq+(p+q)(n+1-(p+q))}{p+q}.$ In particular, for primitive $(n-1,1)$-forms, $g(\Ric_L(\varphi), \overline{\varphi}) \geq 0$ provided the K\"ahler curvature operator is $\left( 3-\frac{2}{n} \right)$-nonnegative.
\end{remark}

\section{The action operator on $(n-1,1)$-forms}

In this section we study the action operator for primitive $(n-1,1)$-forms. This relies on a specific normal form. The idea is that the primitive part of 
    \begin{align*}
        \Lambda^{n-1,1}V^* & \cong \det( (V^{1,0})^*) \otimes V^{1,0} \otimes V^{1,0} \\
        & \cong \det( (V^{1,0})^*) \otimes \left( \Sym^2(V^{1,0}) \oplus \Lambda^2V^{1,0} \right)
    \end{align*}
is exactly $\det( (V^{1,0})^*) \otimes  \Sym^2(V^{1,0})$ and complex symmetric matrices can be diagonalized. 

Let $Z_1, \ldots, Z_n$ be a unitary basis for $V^{1,0}$, let $Z^j = g( \cdot, \overline{Z_j})$ be the dual basis and note that $\overline{Z^j} = g( Z_j, \cdot )$. The complex volume form is given by $\Omega=Z^1 \wedge \ldots \wedge Z^n \in \det( (V^{1,0})^*).$ 

\begin{lemma}
\label{NormalForm}
    For every primitive form $\varphi \in \Lambda^{n-1,1}V^*$ there exists a unitary basis $Z_1, \ldots, Z_n$ for $V^{1,0},$ $t \in \R,$ and $0 \leq a_1 \leq \ldots \leq a_n$ such that 
    \begin{align*}
        \varphi = e^{it} \sum_{k=1}^n a_k  \iota_{Z_k} \Omega \wedge \overline{Z^k} = (-1)^{n-1} e^{it} \sum_{k=1}^n a_k  Z^1 \wedge \ldots \wedge \overline{Z^k} \wedge \ldots \wedge Z^n.
    \end{align*}
\end{lemma}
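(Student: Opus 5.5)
Fix any unitary basis $W_1,\dots,W_n$ of $V^{1,0}$ with volume form $\Omega_W$. The forms $\iota_{W_k}\Omega_W\wedge\overline{W^l}$, $k,l=1,\dots,n$, form a basis of $\Lambda^{n-1,1}V^*$. So every $\varphi\in\Lambda^{n-1,1}V^*$ can be written uniquely as
\begin{align*}
\varphi=\sum_{k,l} B_{kl}\,\iota_{W_k}\Omega_W\wedge\overline{W^l}
\end{align*}
for a complex $n\times n$ matrix $B$. This realizes the identification $\Lambda^{n-1,1}V^*\cong \det((V^{1,0})^*)\otimes V^{1,0}\otimes V^{1,0}$ explained before the lemma: $\overline{W^l}=g(W_l,\cdot)$ corresponds to $W_l$, and $\iota_{W_k}\Omega_W$ corresponds to $\Omega_W\otimes W_k$.

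\textbf{Step 1: primitivity means $B$ is symmetric.} I compute $\Lambda\varphi$, with $\omega=\sqrt{-1}\sum_j W^j\wedge\overline{W^j}$ up to the normalization fixed by $\omega=g(J\cdot,\cdot)$. Up to a nonzero universal constant, $\Lambda$ contracts $\overline{W^l}$ against $W_l$. Hence $\Lambda(\iota_{W_k}\Omega_W\wedge\overline{W^l})$ is a constant multiple of $\iota_{W_l}\iota_{W_k}\Omega_W$. The forms $\iota_{W_l}\iota_{W_k}\Omega_W$ with $k<l$ are linearly independent, and $\iota_{W_l}\iota_{W_k}\Omega_W=-\iota_{W_k}\iota_{W_l}\Omega_W$. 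Therefore $\Lambda\varphi=c\sum_{k<l}(B_{kl}-B_{lk})\,\iota_{W_l}\iota_{W_k}\Omega_W$, and $\varphi$ is primitive if and only if $B=B^T$. This is the step that needs care with sign and normalization conventions, but only the antisymmetry matters, so constants are irrelevant.

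\textbf{Step 2: behaviour under a change of unitary basis.} Let $Z_j=\sum_i U_{ij}W_i$ with $U\in U(n)$. The dual basis transforms by $\overline{U}$: $Z^j=\sum_i\overline{U_{ij}}W^i$. Also $\Omega_Z=\overline{\det U}\,\Omega_W$. The tensor $\Omega\otimes\sum_{k,l}B_{kl}W_k\otimes W_l$ is basis independent, so in the new basis the coefficient matrix is
\begin{align*}
B' = (\det U)\,U^{-1}B\,(U^{-1})^T=(\det U)\,U^{*}B\,\overline{U}.
\end{align*}
Setting $V_0=\overline{U}$, this is $B'=\overline{\det V_0}\;V_0^T B V_0$, i.e.\ congruence by a unitary matrix times a phase.

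\textbf{Step 3: Takagi factorization.} For complex symmetric $B$ there is a unitary $V_0$ with $V_0^TBV_0=\diag(a_1,\dots,a_n)$, where $a_k\ge0$ are the singular values of $B$. Conjugating by a permutation matrix keeps $V_0$ unitary and lets me order $a_1\le\dots\le a_n$. The remaining factor $\overline{\det V_0}$ has modulus one, so I write it as $e^{it}$. This gives $\varphi=e^{it}\sum_k a_k\,\iota_{Z_k}\Omega\wedge\overline{Z^k}$.

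\textbf{Step 4: the second expression.} This follows from $\iota_{Z_k}\Omega=(-1)^{k-1}Z^1\wedge\dots\wedge\widehat{Z^k}\wedge\dots\wedge Z^n$. Moving $\overline{Z^k}$ from the last position into slot $k$ passes it over $n-k$ one-forms, so the total sign is $(-1)^{k-1+n-k}=(-1)^{n-1}$.

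I expect no real obstacle. The main work is the bookkeeping in Steps 1 and 2, namely verifying that primitivity is exactly symmetry and that the transformation law is a unitary congruence up to a phase, so that Takagi's theorem applies.
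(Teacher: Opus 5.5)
Your proposal is correct and follows the same route as the paper. Both arguments expand $\varphi$ in the basis $\iota_{Z_i}\Omega\wedge\overline{Z^j}$, use $\Lambda\varphi=0$ to show the coefficient matrix is symmetric, and derive the transformation law $B'=\det U\,U^{-1}B(U^{-1})^T$ (the paper's $A=\det(U)^{-1}UA'U^T$). Both then apply Takagi's factorization, and your phase $e^{it}=\overline{\det V_0}=\det U$ agrees with the paper's.
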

\begin{proof}
    Let $Z_1, \ldots, Z_n$ be a unitary basis for $V^{1,0}.$ Note that
    \begin{align*}
        \iota_{Z_j}\Omega = (-1)^{j-1} Z^1 \wedge \ldots \wedge \widehat{Z^j} \wedge \ldots \wedge Z^n
    \end{align*}
    and thus every $\varphi \in \Lambda^{n-1,1}V^*$ can be written as 
    \begin{align*}
        \varphi = \sum\nolimits_{i,j=1}^n a_{ij} \iota_{Z_i} \Omega \wedge \overline{Z^j}
    \end{align*}
    Recall that the dual of the Lefschetz operator is given by $\Lambda = - \sqrt{-1} \sum \iota_{\overline{Z_k}}\iota_{Z_k}.$ It follows that 
    \begin{align*}
        \Lambda \varphi = \sqrt{-1}  \sum\nolimits_{i < j} (-1)^{i+j+n} (a_{ij} - a_{ji}) Z^1 \wedge  \ldots \wedge \widehat{Z^i} \wedge \ldots \wedge \widehat{Z^j} \wedge \ldots \wedge Z^n.
    \end{align*}
    As $Z^1 \wedge  \ldots \wedge \widehat{Z^i} \wedge \ldots \wedge \widehat{Z^j} \wedge \ldots \wedge Z^n$ are linearly independent, $\varphi$ is primitive if and only if $a_{ij}=a_{ji}$ for all $1\leq i,j \leq n.$

    We now want to diagonalize $A = (a_{ij})$ via a unitary transformation. Let $Z'_j = \sum_{i=1}^n u_{ij} Z_i$ for $U = (u_{ij}) \in U(n).$ Then $Z'^{j}= \sum _{i=1}^n\overline{u_{ij}} Z^i,$ $\Omega{'} = \det(U)^{-1} \Omega$ and $\overline{Z^{'j}}= \sum_{i=1}^n u_{ij} \overline{Z^i}.$ It follows that $A = \det(U)^{-1} U A' U^T.$ There is $U \in U(n)$ such that $A=UDU^T$ with $D=\diag(a_1, \ldots, a_n)$ and $0 \leq a_1 \leq \ldots \leq a_n.$ In the associated new coordinates, $\varphi$ is in normal form with $e^{it}=\det(U).$
\end{proof}

We now determine the matrix representation of $\mathcal{A}_{\varphi}$ for a $\varphi$ in normal form. To this end, let $e_1, \ldots, e_n, f_1=Je_1, \ldots, f_n=Je_n$ be an adapted orthonormal basis for $V.$ Then, as in \cite[Section 1.2]{PetersenWinkHodgeNumbers}, 
\begin{align*}
R_{ij} & = \frac{1}{\sqrt{2}} ( e_i \wedge e_j + f_i \wedge f_j ) \ \text{ for }  1 \leq i < j \leq n, \\
I_{ij} & = \frac{1}{\sqrt{2}} ( e_i \wedge f_j + e_j \wedge f_i ) \ \text{ for }  1 \leq i < j \leq n,  \\
I_{ii} & =  e_i \wedge f_i \ \hspace{24.7mm}\text{ for }  1 \leq i \leq n
\end{align*}
is an orthonormal basis for $\Lambda^{1,1}_{\R}V \cong \mathfrak{u}(V).$ Note that $R_{ij}=-R_{ji}$ and $I_{ij}=I_{ji}.$

\begin{proposition}
\label{ActionOperatorOnNormalForm}
    Let $Z_i = \frac{1}{\sqrt{2}}( e_i - \sqrt{-1} f_i)$ and for $a_1, \ldots, a_n \in \R$ define $\varphi_0 = \sum_{k=1}^n a_k  \iota_{Z_k} \Omega \wedge \overline{Z^k} = (-1)^{n-1} \sum_{k=1}^n a_k  Z^1 \wedge \ldots \wedge \overline{Z^k} \wedge \ldots \wedge Z^n.$
Then 
\begin{align*}
    \mathcal{A}_{\varphi_0}(R_{ij}) & = (a_i - a_j)^2 R_{ij}, \\
    \mathcal{A}_{\varphi_0}(I_{ij}) & = (a_i + a_j)^2 I_{ij}, \\
    g(\mathcal{A}_{\varphi_0}(I_{ii}),I_{jj}) & = \begin{cases}
        |\varphi_0|^2 - 2(a_i^2+a_j^2) & \text{ if } i \neq j, \\
        |\varphi_0|^2  & \text{ if } i = j.
    \end{cases}
\end{align*}
\end{proposition}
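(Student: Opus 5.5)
The computation is direct. We work out how each basis element of $\mathfrak{u}(V)$ acts on $\varphi_0$ and then take real parts of the Hermitian inner products. Everything rests on how an element $\Xi \in \mathfrak{u}(V)$ acts on the unitary coframe. Since $(\Xi \varphi)(X_1,\dots) = -\sum \varphi(\dots,\Xi X_i,\dots)$, the action on $1$-forms is $\Xi Z^k = -Z^k\circ \Xi$, and it extends to $\varphi_0$ as a derivation. So the first step is to express each basis element as a complex endomorphism of $V^{1,0}$.

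\textbf{Step 1: the basis elements as endomorphisms of $V^{1,0}$.} Using $(x\wedge y)z = g(x,z)y - g(y,z)x$ and $Z_i = \frac{1}{\sqrt2}(e_i - \sqrt{-1} f_i)$, we find:
\begin{itemize}
\item $I_{ii}=e_i\wedge f_i$ acts on $Z_i$ by multiplication by $\pm\sqrt{-1}$ and kills $Z_j$ for $j\neq i$.
\item $R_{ij}$ acts as $\frac{1}{\sqrt2}$ times the real rotation $Z_i\mapsto \mp Z_j$, $Z_j\mapsto \pm Z_i$.
\item $I_{ij}$ acts as $\frac{\sqrt{-1}}{\sqrt2}$ times the swap of $Z_i$ and $Z_j$, with a sign.
\end{itemize}
The dual action on the $Z^k$ is then the negative transpose, and on the $\overline{Z^k}$ it is the complex conjugate of that.

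\textbf{Step 2: action on $\varphi_0$.} It is convenient to write $\varphi_0 = \Omega\otimes S$ with $S=\sum_k a_k Z_k\otimes Z_k$ a symmetric tensor, via the identification $\Lambda^{n-1,1}\cong \det((V^{1,0})^*)\otimes \Sym^2 V^{1,0}$ from the normal form. An endomorphism $\Xi$ acts on $\Omega$ by $-\tr_{\C}(\Xi)$, and on $S$ by $\Xi\cdot S$, i.e.\ as $\Xi$ applied to each factor.
\begin{itemize}
\item $R_{ij}$ is trace-free and real. It sends $S$ to a multiple of $(a_i-a_j)(Z_i\otimes Z_j+Z_j\otimes Z_i)/\sqrt2$, which has norm $|a_i-a_j|$ relative to $|\varphi_0|$. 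Its inner products with the images of all other basis elements vanish, because those images are supported on different index pairs (or are purely imaginary relative to it).
\item $I_{ij}$ behaves the same way with $a_i+a_j$, because the extra factor $\sqrt{-1}$ changes the relative sign.
\end{itemize}
Cross terms between $R_{ij}$ and $I_{ij}$ are real parts of purely imaginary numbers and therefore vanish. This diagonality is the reason the basis was chosen.

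\textbf{Step 3: the diagonal block $I_{ii}$.} $I_{ii}$ acts on $\Omega$ by $-\sqrt{-1}$ (up to a sign convention). On $S$ it multiplies only the coefficient $a_i$, and by $2\sqrt{-1}$, since both tensor factors are $Z_i$. Under the $\Omega\otimes S$ identification, the $\overline{Z^k}$ slot corresponds to a $V^{1,0}$ vector, which accounts for the opposite sign. So
\[
I_{ii}\varphi_0 = \sqrt{-1}\,\Omega\otimes\Bigl(\sum_k a_k Z_k\otimes Z_k - 2a_i Z_i\otimes Z_i\Bigr)
\]
up to an overall sign. Hence
\[
\operatorname{Re} g(I_{ii}\varphi_0,\overline{I_{jj}\varphi_0}) = \sum_k a_k^2 - 2a_i^2 - 2a_j^2 + 4\delta_{ij}a_i^2 .
\]
This equals $|\varphi_0|^2-2(a_i^2+a_j^2)$ for $i\ne j$ and $|\varphi_0|^2$ for $i=j$, where $|\varphi_0|^2=\sum a_k^2$ in this normalization. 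Pairings between $I_{ii}$ and $R_{kl}$ or $I_{kl}$ with $k\neq l$ vanish, since they are supported on disjoint tensor components.

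\textbf{Main obstacle.} The real difficulty is bookkeeping: getting the signs and the factors of $\sqrt2$ and $\sqrt{-1}$ consistent with the paper's conventions for the identification $\Lambda^2\cong\mathfrak{so}$, the dual action, and the norm of $\iota_{Z_k}\Omega\wedge\overline{Z^k}$. The cleanest check is to verify the normalization at the end against Proposition~\ref{PropertiesActionOperator}(c). The stated matrix gives trace
\[
\sum_{i<j}\bigl[(a_i-a_j)^2+(a_i+a_j)^2\bigr] + n|\varphi_0|^2 = (n-1)|\varphi_0|^2\cdot 2 + n|\varphi_0|^2 = (3n-2)|\varphi_0|^2,
\]
which matches $2(n-1)+n(1)=3n-2$. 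So if the computation reproduces this trace, the normalizations are right.
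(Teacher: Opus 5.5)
Your proposal is correct and follows essentially the paper's route: you compute the action of $R_{ij}$, $I_{ij}$, $I_{ii}$ on the unitary (co)frame and propagate it to $\varphi_0$. This gives $R_{ij}\varphi_0$ and $I_{ij}\varphi_0$ as real, respectively imaginary, multiples $(a_i\mp a_j)/\sqrt{2}$ of $\iota_{Z_i}\Omega\wedge\overline{Z^j}+\iota_{Z_j}\Omega\wedge\overline{Z^i}$, and $I_{ii}\varphi_0=-\sqrt{-1}\,(\varphi_0-2a_i\,\iota_{Z_i}\Omega\wedge\overline{Z^i})$, from which you read off the real parts of the inner products. Writing $\varphi_0$ as $\Omega\otimes S$ is only bookkeeping for that same computation. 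Your remark that $R_{ij}\varphi_0$ and $I_{ij}\varphi_0$ are orthogonal only after taking real parts is exactly what is needed, and is slightly more precise than the paper's ``mutually orthogonal''.
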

\begin{proof}
From the definition of the action it is straightforward to compute that
    \begin{align*}
        R_{ij} Z_k & = \frac{1}{\sqrt{2}} \left( \delta_{ik} Z_j - \delta_{jk} Z_i \right), \hspace{5.65mm} R_{ij} \overline{Z_k}  = \frac{1}{\sqrt{2}} \left( \delta_{ik} \overline{Z_j} - \delta_{jk} \overline{Z_i} \right),\\
        I_{ij} Z_k & = \frac{\sqrt{-1}}{\sqrt{2}} \left( \delta_{ik} Z_j + \delta_{jk} Z_i \right), \hspace{4mm} I_{ij} \overline{Z_k}  = - \frac{\sqrt{-1}}{\sqrt{2}} \left( \delta_{ik} \overline{Z_j} + \delta_{jk} \overline{Z_i} \right),  \\
        I_{ii} Z_k & = \sqrt{-1} \delta_{ik} Z_i, \hspace{23.25mm} I_{ii} \overline{Z_k}  = - \sqrt{-1} \delta_{ik} \overline{Z_i}.
    \end{align*}
Since $(L Z^i)(\overline{Z_j})= - Z^i(L\overline{Z_j})=0$ and $(L \overline{Z^i})(Z_j)= - \overline{Z^i}(LZ_j)=0$ for all $L \in \mathfrak{u}(V),$ we obtain similarly 
    \begin{align*}
        R_{ij} Z^k & = \frac{1}{\sqrt{2}} \left( \delta_{ik} Z^j - \delta_{jk} Z^i \right), \hspace{8.8mm} R_{ij} \overline{Z^k}  = \frac{1}{\sqrt{2}} \left( \delta_{ik} \overline{Z^j} - \delta_{jk} \overline{Z^i} \right),\\
        I_{ij} Z^k & = - \frac{\sqrt{-1}}{\sqrt{2}} \left( \delta_{ik} Z^j + \delta_{jk} Z^i \right), \hspace{4mm} I_{ij} \overline{Z^k}  = \frac{\sqrt{-1}}{\sqrt{2}} \left( \delta_{ik} \overline{Z^j} + \delta_{jk} \overline{Z^i} \right),  \\
        I_{ii} Z^k & = - \sqrt{-1} \delta_{ik} Z^i, \hspace{24.2mm} I_{ii} \overline{Z^k}  = \sqrt{-1} \delta_{ik} \overline{Z^i}.
    \end{align*}
It follows that 
\begin{align*}
    R_{ij} Z^1 \wedge \ldots \wedge \overline{Z^k} \wedge \ldots \wedge Z^n & = \frac{\delta_{ik}-\delta_{jk}}{\sqrt{2}} \left(  Z^1 \wedge \ldots \wedge \underset{\text{$j$-th slot}}{\overline{Z^i}} \wedge \ldots \wedge Z^n \right.  \\
    & \hspace{30mm}+ \left. Z^1 \wedge \ldots \wedge \underset{\text{$i$-th slot}}{\overline{Z^j}} \wedge \ldots \wedge Z^n \right) \\
    & = (-1)^{n-1} \frac{\delta_{ik}-\delta_{jk}}{\sqrt{2}} \left( \iota_{Z_i} \Omega \wedge \overline{Z^j} +  \iota_{Z_j} \Omega \wedge \overline{Z^i} \right), \\
    I_{ij} Z^1 \wedge \ldots \wedge \overline{Z^k} \wedge \ldots \wedge Z^n & = \sqrt{-1}  \ (-1)^{n-1} \frac{\delta_{ik}+\delta_{jk}}{\sqrt{2}} \left( \iota_{Z_i} \Omega \wedge \overline{Z^j} +  \iota_{Z_j} \Omega \wedge \overline{Z^i} \right), \\
    I_{ii} Z^1 \wedge \ldots \wedge \overline{Z^k} \wedge \ldots \wedge Z^n & = \sqrt{-1} \  (2 \delta_{ik} -1) Z^1 \wedge \ldots \wedge \overline{Z^k} \wedge \ldots \wedge Z^n \\
    & = - \sqrt{-1} \ (-1)^{n-1}  (1 - 2 \delta_{ik}) \iota_{Z_k} \Omega \wedge \overline{Z^k}.
\end{align*}
Therefore, 
\begin{align*}
    R_{ij} \varphi_0 
    & = \frac{a_i-a_j}{\sqrt{2}} \left( \iota_{Z_i} \Omega \wedge \overline{Z^j} +  \iota_{Z_j} \Omega \wedge \overline{Z^i} \right), \\
    I_{ij} \varphi_0 & = \sqrt{-1}  \ \frac{a_i+a_j}{\sqrt{2}} \left( \iota_{Z_i} \Omega \wedge \overline{Z^j} +  \iota_{Z_j} \Omega \wedge \overline{Z^i} \right), \\
    I_{ii} \varphi_0
    & = - \sqrt{-1} \ \left( \varphi_0 - 2 a_i \iota_{Z_i} \Omega \wedge \overline{Z^i} \right).
\end{align*}
Direct inspection shows that except for $I_{ii}, I_{jj}$ all of $R_{ij} \varphi_0, I_{ij} \varphi_0, I_{ii} \varphi_0$ are mutually orthogonal for $i<j.$ Definition \ref{DefinitionActionOperator} of $\mathcal{A}_{\varphi_0}$ yields the claim.
\end{proof}

\begin{theorem}
\label{AdjustmentActionOperator}
    Let $n \geq 4.$ For every primitive $(n-1,1)$-form $\varphi \in \Lambda^{n-1,1}V^*$ there exists $\eta \in \Lambda^{2,2}_{\R}V^*$ such that 
    \begin{align*}
        0 \leq \mathcal{A}_{\varphi} + \hat{\eta} \leq 2 | \varphi |^2 \id_{\Lambda^{1,1}_{\R}V} \ \text{ and } \ \tr( \mathcal{A}_{\varphi} + \hat{\eta}) = (n^2-n+2)| \varphi|^2.
    \end{align*}
\end{theorem}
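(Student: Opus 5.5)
By Proposition \ref{PropertiesActionOperator}(a),(b) and Lemma \ref{NormalForm}, it suffices to treat $\varphi=\varphi_0=\sum_k a_k\,\iota_{Z_k}\Omega\wedge\overline{Z^k}$ with $0\le a_1\le\dots\le a_n$. The first reason is that conjugating $\mathcal{A}_{\varphi}+\hat\eta$ by $U\in U(V)$ yields $\mathcal{A}_{U\varphi}+\widehat{U\eta}$. The second is that $\widehat{\Lambda^{2,2}_\R V^*}$ is $U(n)$-invariant. Normalize $|\varphi_0|^2=\sum_k a_k^2=1$. Proposition \ref{ActionOperatorOnNormalForm} then gives a block decomposition of $\mathcal{A}_{\varphi_0}$:
\begin{itemize}
\item $R_{ij}$ has eigenvalue $(a_i-a_j)^2$;
\item $I_{ij}$, $i<j$, has eigenvalue $(a_i+a_j)^2$;
\item on $\operatorname{span}\{I_{ii}\}$ it acts by the $n\times n$ matrix $M=\mathbf{1}\mathbf{1}^T-2(b\mathbf{1}^T+\mathbf{1}b^T)+4\diag(b)$, where $b=(a_i^2)$.
\end{itemize}
In particular $\tr\mathcal{A}_{\varphi_0}=(3n-2)$, consistent with Proposition \ref{PropertiesActionOperator}(c). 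We therefore need an $\eta$ that adds exactly $(n-2)^2$ to the trace while keeping every eigenvalue in $[0,2]$.

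The ansatz I would use is $\eta=\sum_{i<j}c_{ij}\,e_i\wedge f_i\wedge e_j\wedge f_j$, which is a real $(2,2)$-form, with real coefficients $c_{ij}$ to be chosen depending on $a$. A direct evaluation of $g(\hat\eta(x\wedge y),z\wedge w)=\eta(x,y,z,w)$ on the basis $R_{ij},I_{ij},I_{ii}$ shows that $\hat\eta$ respects the same block structure. It shifts both the $R_{ij}$ and the $I_{ij}$ eigenvalue by $-c_{ij}$ (up to a sign convention, which I would recheck). On the $I_{ii}$-block it adds the symmetric matrix with zero diagonal and off-diagonal entries $c_{ij}$. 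Hence
\begin{align*}
\tr(\mathcal{A}_{\varphi_0}+\hat\eta)=(3n-2)-2\sum_{i<j}c_{ij},
\end{align*}
and the trace condition becomes $\sum_{i<j}c_{ij}=-\tfrac{(n-2)^2}{2}$. This is exactly the phenomenon stressed in the introduction, namely that $\tr\hat\eta\ne0$ is possible.

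Writing $d_{ij}=-c_{ij}\ge0$, the remaining conditions are as follows:
\begin{itemize}
\item $(a_i+a_j)^2+d_{ij}\le 2$;
\item $(a_i-a_j)^2+d_{ij}\ge0$, which is automatic;
\item $0\le M-D\le 2\,\id$, where $D$ is the zero-diagonal matrix with entries $d_{ij}$.
\end{itemize}
A natural first choice is $d_{ij}=\theta\,(2-(a_i+a_j)^2)$ with $\theta\in[0,1]$ fixed by the trace condition. Note that $\sum_{i<j}(a_i+a_j)^2=(n-2)+(\sum a_i)^2\le 2n-2$, so $\sum_{i<j}(2-(a_i+a_j)^2)\ge (n-1)(n-2)$, which exceeds $\tfrac{(n-2)^2}{2}$. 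Hence some $\theta\le\tfrac12$ works, and the first two conditions hold. If $\theta$ fails in the last step, I would instead use the more rigid choice $d_{ij}=\mu-\nu(b_i+b_j)$ with constants $\mu,\nu$. Its reason is that it is adapted to the affine structure of $M$ in $b$.

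The main obstacle is the last condition, on the $I_{ii}$-block. The matrix $M-D$ has diagonal entries $1$, which is fine for the upper bound in the trace sense, and its off-diagonal entries are $1-2b_i-2b_j-d_{ij}$. Because $\sum d_{ij}$ is of order $n^2$, these entries can become very negative, and positivity of $M-D$ must be checked carefully. Its trace is $n$, so the requirement is that the spectrum of this rank-perturbed matrix stays in $[0,2]$. I would verify this by writing $M-D=(1+\mu)\mathbf{1}\mathbf{1}^T-(2-\nu)(b\mathbf{1}^T+\mathbf{1}b^T)+4\diag(b)-\mu\,\id$-type expressions. Then I would estimate eigenvalues on $\mathbf{1}^\perp$ and on $\operatorname{span}\{\mathbf{1},b\}$ separately, using $\sum b_i=1$ and $0\le b_i\le1$. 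I expect that $n\ge4$ is exactly what makes the resulting inequalities close, since for $n=3$ the paper only claims $l_3=\tfrac72$. If this diagonal ansatz cannot satisfy both ends simultaneously, the fallback is to enlarge $\eta$ by real parts of $Z^i\wedge Z^j\wedge\overline{Z^k}\wedge\overline{Z^l}$ with $\{i,j\}\ne\{k,l\}$. These couple $R_{ij}$, $I_{ij}$ with $R_{kl}$, $I_{kl}$ and leave the trace unchanged, which gives extra freedom to flatten the spectrum.
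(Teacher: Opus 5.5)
Your reduction to $\varphi_0$ matches the paper, and so does the diagonal ansatz $\eta=\sum_{i<j}c_{ij}\,e^i\wedge f^i\wedge e^j\wedge f^j$. The block action of $\hat\eta$ and the trace requirement $\sum_{i<j}d_{ij}=\tfrac{(n-2)^2}{2}|\varphi|^2$ are also correct. The gap is that you never produce coefficients for which the $I_{ii}$-block satisfies $0\le M-D\le 2\,\id$, and the candidates you propose provably fail.

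First, your standing assumption $d_{ij}\ge 0$ is incompatible with the statement; the $R_{ij}$-condition only needs $d_{ij}\ge -(a_i-a_j)^2$. Take $a=(0,\dots,0,1)$. Then $M=vv^T$ with $v=\mathbf{1}-2e_n$ and $|v|^2=n$. The bound $v^T(M-D)v\le 2n$ together with the trace condition forces $\sum_{i<n}d_{in}\le -\tfrac{n-2}{2}<0$. So $d_{ij}=\theta(2-(a_i+a_j)^2)\ge0$ cannot work.

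Second, the fallback $d_{ij}=\mu-\nu(b_i+b_j)$ also fails. Take $n=4$ and $a=(0,0,\tfrac{1}{\sqrt2},\tfrac{1}{\sqrt2})$. The $I_{34}$- and $R_{34}$-conditions force $d_{34}=0$, i.e.\ $\mu=\nu$. The trace then forces $\mu=\nu=\tfrac23$. For $x=(\tfrac12,\tfrac12,1,1)$ one gets $x^T(M-D)x=-\tfrac23<0$.

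The last resort of enlarging $\eta$ by off-diagonal $(2,2)$-forms is not carried out. So positivity of the $I_{ii}$-block, which you rightly call the crux, remains unproved.

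The idea you are missing is to take $d_{ij}=|\varphi|^2+h_{ij}-2(a_i^2+a_j^2)$. This turns the $I_{ii}$-block into $|\varphi|^2\,\id-H$, where $H=(h_{ij})$ has zero diagonal. Now impose $h_{ij}\ge 0$ and $\sum_{j\ne i}h_{ij}=|\varphi|^2$ for every $i$. Then the block is a weighted graph Laplacian, $x^T(|\varphi|^2\id-H)x=\sum_{i<j}h_{ij}(x_i-x_j)^2\in[0,2|\varphi|^2|x|^2]$. The same choice does the rest of the work:
\begin{itemize}
\item the trace identity becomes automatic;
\item the $I_{ij}$-eigenvalue becomes $|\varphi|^2+h_{ij}-(a_i-a_j)^2\in[0,2|\varphi|^2]$;
\item the $R_{ij}$-eigenvalue becomes $|\varphi|^2+h_{ij}-(a_i+a_j)^2\le 2|\varphi|^2$.
\end{itemize}
The only remaining constraint is $h_{ij}\ge\max\{(a_i+a_j)^2-|\varphi|^2,0\}$. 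Finding such $h_{ij}$ is a weighted degree-realization problem, solved by Hakimi's theorem or Farkas' lemma. Let $r_i$ be the residual degrees after subtracting these lower bounds; one needs $r_i\ge0$ and $2\max_i r_i\le\sum_i r_i$. The paper gets $r_i\ge 0$ from Cauchy--Schwarz. For the second condition, it observes that the pairs with $(a_i+a_j)^2>|\varphi|^2$ pairwise intersect, so they form a star or a triangle, which gives $\sum_i r_i\ge (n-2)|\varphi|^2\ge 2\max_i r_i$. This is exactly where $n\ge4$ enters; for $n=3$ and $a=(0,1,1)$ no such $h_{ij}$ exist.
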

\begin{proof}
    Pick an adapted orthonormal basis $e_1, \ldots, e_n,$ $f_1=Je_1, \ldots, f_n=Je_n$ for $V$ such that $\varphi = e^{it} \varphi_0$ as in Lemma \ref{NormalForm}. As $\mathcal{A}_{\varphi}=\mathcal{A}_{\varphi_0}$ due to Proposition \ref{PropertiesActionOperator} (b) and $\mathcal{A}_{\varphi_0}$ has been computed explicitly in Proposition \ref{ActionOperatorOnNormalForm}, the goal is to construct $\eta \in \Lambda^{2,2}_{\R}V^*$ in these coordinates accordingly. 

    To this end, observe that
    \begin{align*}
        e_i \wedge f_i \wedge e_j \wedge f_j = I_{ii} \wedge I_{jj} = - R_{ij} \wedge R_{ij} = - I_{ij} \wedge I_{ij}.
    \end{align*}
    Thus, by setting $\eta_{ij}=e^i \wedge f^i \wedge e^j \wedge f^j$ to be the dual, we obtain from the evaluation pairing $g( \hat{\eta}(x \wedge y), z \wedge w)=\eta(x,y,z,w)=\eta(x \wedge y \wedge z \wedge w)$ the identities
    \begin{align*}
        \widehat{\eta_{ij}}(R_{kl}) & = - \delta_{ik}\delta_{jl}  R_{ij}, \\
        \widehat{\eta_{ij}}(I_{kl}) & = - \delta_{ik}\delta_{jl}  I_{ij}, \\
        g(\widehat{\eta_{ij}}(I_{kk}),I_{ll}) & =\begin{cases}
            1 & \text{ if } \{i,j\}=\{k,l\}, \\ 
            0 & \text{ otherwise.}
        \end{cases}
    \end{align*}
    Define  
    \begin{align*}
        \eta = \sum_{i<j} ( 2 (a_i^2 +a_j^2) - |\varphi_0|^2 - h_{ij}) \eta_{ij}
    \end{align*}
    with auxiliary parameters $h_{ij} \in \R$ to be determined later. The above identities and Proposition \ref{ActionOperatorOnNormalForm} show that
    \begin{align*}
        (\mathcal{A}_{\varphi_0} + \hat{\eta})(R_{ij})&=(-(a_i+a_j)^2+|\varphi_0|^2+h_{ij}) R_{ij}, \\
        (\mathcal{A}_{\varphi_0} + \hat{\eta})(I_{ij})&=( -(a_i-a_j)^2 +  |\varphi_0|^2 + h_{ij}) I_{ij},\\
        g((\mathcal{A}_{\varphi_0} + \hat{\eta})(I_{ii}),I_{jj}) &= \begin{cases}
            - h_{ij} & \text{ if } i \neq j, \\
            |\varphi_0|^2 & \text{ if } i=j.
        \end{cases}
    \end{align*}
    To find conditions on $h_{ij}$ to obtain $0 \leq \mathcal{A}_{\varphi} + \hat{\eta} \leq 2 | \varphi |^2 \id_{\Lambda^{1,1}_{\R}V},$ note that if
    \begin{align*}
    A = \begin{pmatrix}
   |\varphi_0|^2 & -h_{12} &-h_{13} &\cdots& -h_{1n}\\
   -h_{12} & |\varphi_0|^2 & -h_{23}&\cdots& -h_{2n}\\
   -h_{13}& -h_{23}& |\varphi_0|^2 &\cdots& -h_{3n}\\
    \vdots&\vdots&\vdots&\ddots&\vdots\\
   -h_{1n}& -h_{2n}& -h_{3n} &\cdots& |\varphi_0|^2
    \end{pmatrix}
    \end{align*}
    with $h_{ij} \geq 0,$ then 
    \begin{align*}
        x^TAx & = \sum_{1 \leq i < j \leq n} h_{ij}(x_i - x_j)^2 + \sum_{i=1}^n \left( | \varphi_0|^2 - \sum_{j:j \neq i} h_{ij} \right) x_i^2 \\
        & \leq 2 \sum_{i=1}^n x_i^2  \sum_{j:j \neq i} h_{ij} + |\varphi_0|^2 |x|^2  - \sum_{i=1}^n x_i^2 \sum_{j : j \neq i} h_{ij} \\
        & = |\varphi_0|^2 |x|^2 + \sum_{i=1}^n x_i^2 \sum_{j : j \neq i} h_{ij}.
    \end{align*}
    Thus, if 
    \begin{align}
    \label{NonnegativityConditions}
        \max \{ (a_i+a_j)^2-| \varphi_0|^2, 0 \} \leq h_{ij} \ \text{ and } \
 \sum_{j : j \neq i} h_{ij} = |\varphi_0|^2 \ \text{ for all } i,
    \end{align}
    then $\mathcal{A}_{\varphi_0} + \hat{\eta}$ is nonnegative, and as $h_{ij} \leq | \varphi|^2,$ also $\mathcal{A}_{\varphi_0}+ \hat{\eta} \leq 2 |\varphi_0|^2 \id_{\Lambda^{1,1}_{\R}V}.$ According to Proposition \ref{ExistenceOfCoefficients}, as $|\varphi_0|^2 = \sum_{i=1}^n a_i^2,$ we may indeed pick $h_{ij}$ according to \eqref{NonnegativityConditions} for $n \geq 4.$
    
    By construction then we have $2 \sum_{i < j} h_{ij} = \sum_{i=1}^n \sum_{j: j \neq i} h_{ij} = n| \varphi_0|^2$ and 
    \begin{align*}
        \tr( \mathcal{A}_{\varphi_0} + \hat{\eta}) & = 2 \sum_{i <j} \left( |\varphi_0|^2+h_{ij}-a_i^2-a_j^2 \right) + n |\varphi_0|^2 \\
        & = (n(n-1)+n- 2(n-1)+n)  | \varphi_0|^2 
        = (n^2-n+2)|\varphi_0|^2
    \end{align*}
    as claimed. 
\end{proof}

\begin{proposition}
    \label{ExistenceOfCoefficients} Let $n=2$ or $n \geq 4.$ For all $a_1, \ldots, a_n \geq 0$  there exist $h_{ij}=h_{ji} \in \R$ for $1 \leq i < j \leq n$ such that
    \begin{align*}
                \max \{ (a_i+a_j)^2-\sum_{k=1}^n a_k^2, 0 \} \leq h_{ij} \text{ and } \sum_{j:j \neq i} h_{ij} =\sum_{k=1}^n a_k^2 \ \text{ for all } i.
    \end{align*}
\end{proposition}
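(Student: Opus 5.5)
The plan is to reduce the statement to a realization problem for a weighted complete graph. Write $S=\sum_k a_k^2$ and $l_{ij}=\max\{(a_i+a_j)^2-S,0\}$ for $i \neq j$. Observe that
\begin{align*}
(a_i+a_j)^2-S=2a_ia_j-\sum_{k\neq i,j}a_k^2 .
\end{align*}
Put $h_{ij}=l_{ij}+g_{ij}$. It then suffices to find a symmetric $g_{ij}\geq 0$ with zero diagonal and prescribed row sums
\begin{align*}
r_i = S-\sum_{j\neq i} l_{ij}.
\end{align*}
For $n\geq 3$ such a $g$ exists if and only if $r_i\geq 0$ for all $i$ and $\max_i r_i\leq \sum_{j\neq i} r_j$. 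I will prove this small graph-theoretic lemma first. One direct construction: order $r_1\leq\ldots\leq r_n$, use edges to the vertex $n$ together with a cyclic (triangle) distribution among the remaining vertices to absorb the excess; alternatively, appeal to a max-flow/transportation argument. The case $n=2$ is immediate: $h_{12}=S$ works, since $l_{12}\leq 2a_1a_2\leq a_1^2+a_2^2=S$.

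Next I will pin down the support of $l$. If $\{i,j\}$ and $\{k,l\}$ are disjoint pairs with $l_{ij},l_{kl}>0$, then
\begin{align*}
2a_ia_j> a_k^2+a_l^2\geq 2a_ka_l \quad\text{and}\quad 2a_ka_l> a_i^2+a_j^2\geq 2a_ia_j,
\end{align*}
which is a contradiction. Hence the pairs with positive lower bound pairwise intersect, so they form either a star centered at some vertex $i$ or a triangle $\{i,j,k\}$. In the triangle case $l_{ij}>0$ gives $2a_ia_j> a_k^2+\sum_{m\notin\{i,j,k\}}a_m^2$, and similarly for the other two pairs. Adding these should show that each row sum of $l$ in the triangle is at most roughly $2\max a\cdot(\text{sum of the other two})-\ldots\leq S$. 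I expect that the leftover coordinates, which exist only because $n\geq 4$, are exactly what keeps $r_i\geq 0$ and the balancing condition valid. This matches the exclusion of $n=3$ in Theorem \ref{AdjustmentActionOperator}: for $n=3$ with $a_1=a_2=a_3$ the triangle case fails.

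In the star case centered at $i$ with leaf set $J$, each leaf row has a single term $l_{ij}\leq 2a_ia_j\leq S$, so the leaf rows are fine. The real content is the center row. There we need
\begin{align*}
\sum_{j\in J}\Bigl(2a_ia_j-\sum_{k\neq i,j}a_k^2\Bigr)\leq S,
\end{align*}
together with the balancing condition $r_i\leq\sum_{j\neq i}r_j$, or, if the maximum is attained at a leaf, the corresponding inequality for that leaf. For the center-row inequality I would first try to bound each term by $2a_ia_j-\sum_{k\in J\setminus\{j\}}a_k^2$ and then use AM--GM in the form $2a_ia_j\leq \epsilon a_i^2+a_j^2/\epsilon$, which collapses the sum to something $\leq a_i^2+\sum_{j\in J}a_j^2$. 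When $|J|\geq 2$, positivity of each term already forces the leaves to be small relative to $a_i$, which should help.

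I expect this center-of-star estimate and the balancing inequality, rather than the combinatorial reduction, to be the main obstacle. If they resist a clean algebraic proof, my fallback is to normalize $S=1$ and maximize $\sum_j l_{ij}$ over the simplex-type constraint set, checking the extremal configurations directly.
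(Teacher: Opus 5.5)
There is a genuine gap. Several parts of your plan are sound and follow the paper's strategy:
\begin{itemize}
\item the reduction $h_{ij}=l_{ij}+g_{ij}$ together with the weighted-degree realization criterion (the paper cites Hakimi and also gives a Farkas-lemma proof);
\item the star/triangle dichotomy for the support of $l$;
\item your center-row estimate. AM--GM with $\epsilon=1/|J|$ gives $\sum_{j\in J} l_{ij}\leq 2a_i\sum_J a_j-(|J|-1)\sum_J a_j^2\leq a_i^2+\sum_J a_j^2\leq S$, which is the paper's Cauchy--Schwarz bound in another form.
\end{itemize}
What is missing is the balancing inequality $2\max_i r_i\leq \sum_i r_i$, which you explicitly leave open, offering only a numerical fallback. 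The triangle-case row bound is also only asserted ``roughly''.

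The missing idea is to stop working row by row and bound the total $L=\sum_{i<j} l_{ij}$ instead. Since $l\geq 0$, you have $r_i\leq S$ and $r_i\geq S-L$, and $\sum_i r_i = nS-2L$. So it suffices to show $L\leq S$. Then $\sum_i r_i\geq (n-2)S\geq 2S\geq 2\max_i r_i$ for $n\geq 4$, and $r_i\geq 0$ for every $i$. The bound $L\leq S$ follows directly from your dichotomy:
\begin{itemize}
\item In the star case, $L$ is exactly the center-row sum, which you have already bounded by $S$.
\item In the triangle case $\{i,j,k\}$,
\begin{align*}
L=(a_i+a_j)^2+(a_i+a_k)^2+(a_j+a_k)^2-3S\leq 4(a_i^2+a_j^2+a_k^2)-3S\leq S .
\end{align*}
\end{itemize}

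Your account of where $n\geq 4$ enters is also off. Nonnegativity of the $r_i$ holds for every $n$; the paper's proof of $d_i\geq 0$ never uses $n\geq 4$. The hypothesis is used only in the final step $(n-2)S\geq 2S$. Your claimed $n=3$ failure $a_1=a_2=a_3$ is not a failure. For $n=3$ the row equations force $h_{ij}=S/2$, and for $a=(1,1,1)$ this gives $h_{ij}=3/2\geq l_{ij}=1$. A genuine counterexample is $(0,1,1)$, where $S=2$ but $l_{23}=2>S/2$.

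Finally, your constructive sketch of the realization lemma (``edges to vertex $n$ plus a triangle distribution'') would need to be made precise. The max-flow or Farkas argument you mention as an alternative is enough for that step.
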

\begin{proof}
    We may assume $n \geq 4.$ The $h_{ij}$ exist if and only if there are $x_{ij}$ such that 
    \begin{align*}
        x_{ij} & =  h_{ij} - \max \{ (a_i+a_j)^2-\sum_{k=1}^n a_k^2, 0 \} \geq 0 \ \text{ and } \\\sum_{j \neq i} x_{ij} & = d_i := \sum_{k=1}^n a_k^2 - \sum_{j:j \neq i} \max \{ (a_i+a_j)^2-\sum_{k=1}^n a_k^2, 0 \}.
    \end{align*}
    
    For the vertices $V=\{ 1, \ldots, n \}$ let $x_{ij}$ be the weight on the edge $\{ i, j\}.$ The weighted degree of $i \in V$ is by definition $\deg_w(i)=\sum_{j:j \neq i} x_{ij}.$ Due to \cite[Theorem 6]{HakimiRealizabilityAsDegreesOfGraphs}, 
    nonnegative numbers $d_1 \leq \ldots \leq d_n$ can be realized as the weighted degrees of the graph, $d_i = \sum_{j:j \neq i} x_{ij},$  if and only if $2d_n \leq \sum_{i=1}^n d_i.$

    For completeness, we note that the 'if'-implication also follows from Farkas' lemma, \cite{FarkasEinfacheUngleichungen,KuhnTuckerNonlinearProgramming}. If $e_1, \ldots, e_n$ denotes the standard basis of $\R^n,$ then either there exist $x_{ij} \geq 0$ such that $\sum_{i<j} x_{ij} (e_i +e_j)=d:=\sum_{i=1}^n d_i e_i$ or there exists $y \in \R^n$ such that $\left\langle y,e_i+e_j\right\rangle \geq 0$ and $\left\langle y,d\right\rangle<0.$ If $y_i + y_j \geq 0,$ there is at most one coordinate $y_{i_0}<0.$ Thus, $\left\langle y,d\right\rangle \geq d_{i_0}y_{i_0} - y_{i_0} \sum_{i:i \neq i_0} d_i = y_{i_0} \left( 2d_{i_0} - \sum_{i=1}^n d_i \right) \geq 0$ and Farkas' lemma applies.  

    It remains to check that $d_i \geq 0$ and $2 \max \{ d_i \} \leq \sum_{i=1}^n d_i$ in the initial problem.

    Let $|A|^2=\sum_{i=1}^n a_i^2$ and consider $J_i = \{ j \neq i \ | \ (a_i+a_j)^2 > |A|^2 \}$. Note that $\sum_{j \in J_i} a_j \leq \left( |J_i| \sum_{j \in J_i} a_j^2 \right)^{1/2} \leq |J_i|^{1/2} \left( |A|^2 -a_i^2 \right)^{1/2}$ and therefore
    \begin{align*}
        |A|^2 - d_i & = \sum_{j \in J_i} \left( (a_i+a_j)^2 - |A|^2 \right) \\
        & = - |J_i|(|A|^2-a_i^2) + 2 a_i \sum_{j \in J_i} a_j + \sum_{j \in J_i} a_j^2 \\
        & \leq - \left( |J_i|^{1/2} \left( |A|^2 - a_i^2 \right)^{1/2} - a_i \right)^{2} + |A|^2.
    \end{align*}
    This shows $d_i \geq 0.$

    To prove $2\max\{ d_i \} \leq \sum_{i=1}^n d_i$ we first make an observation. There are no distinct indices $i,j,k,l$ such that both $|A|^2 < (a_i+a_j)^2$ and $|A|^2<(a_k+a_l)^2$ as otherwise $2|A|^2 < (a_i+a_j)^2 + (a_k+a_l)^2 \leq 2|A|^2.$ Thus, either there is $1 \leq k \leq n$ such that
    \begin{align*}
        \sum_{i < j} \max \{ (a_i+a_j)^2-|A|^2, 0 \} = \sum_{j \in J_k} \left( (a_k+a_j)^2-|A|^2 \right) \leq |A|^2
    \end{align*}
    as established above, or there are distinct $i,j,k$ such that 
    \begin{align*}
        \sum_{i < j} \max \{ (a_i+a_j)^2-|A|^2, 0 \} & = (a_i + a_j)^2 + (a_i +a_k)^2 + (a_j +a_k)^2 - 3 |A|^2 \\
        & \leq 4(a_i^2+a_j^2+a_k^2) - 3|A|^2 \leq |A|^2.
    \end{align*}
    It follows that
    \begin{align*}
        \sum_{i=1}^n d_i & = n |A|^2 -2 \sum_{i<j} \max \{ (a_i+a_j)^2-|A|^2, 0 \} \\
        & \geq (n-2)|A|^2 \geq (n-2) \max \{ d_i \} \geq 2 \max \{ d_i \}
    \end{align*}
    since $n \geq 4.$
\end{proof}

\begin{remark}
\normalfont
    Proposition \ref{ExistenceOfCoefficients} does not hold for $n=3.$ Consider for example $(a_1,a_2,a_3)=(0,1,1).$
\end{remark}

\begin{remark}
\label{DimensionThreeAdjustment}
    \normalfont
    If we remove the requirement  $\sum_{j:j \neq i} h_{ij} = |\varphi_0|^2$ for $i=1, \ldots, n$ from \eqref{NonnegativityConditions} and define $h_{ij} = \max \{ (a_i+a_j)^2 - |\varphi_0|^2, 0 \}$ in the proof of Theorem \ref{AdjustmentActionOperator}, then by continuing the proof verbatim one obtains $\mathcal{A}_{\varphi_0} + \hat{\eta} \geq 0$ and  
    \begin{align*}
        \tr(\mathcal{A}_{\varphi_0} + \hat{\eta})& =(n^2-2n+2)|\varphi_0|^2 + 2 \sum\nolimits_{i<j} h_{ij}, \\
        \lambda_{\max}(\mathcal{A}_{\varphi_0} + \hat{\eta}) & \leq  |\varphi_0|^2 + \max_i \sum\nolimits_{j:j \neq i} h_{ij} \leq |\varphi_0|^2 + \sum\nolimits_{i<j} h_{ij},
    \end{align*}
    where $\lambda_{\max}$ is the largest eigenvalue. 
    
    Set $h=\sum_{i<j} h_{ij}$ and note that $h \leq |\varphi_0|^2.$ This follows as in the proof of Proposition \ref{ExistenceOfCoefficients}, where it is phrased as $\sum_{i<j} \max \{ (a_i+a_j)^2-|A|^2,0\} \leq |A|^2.$ The quotient $\frac{N+2h}{1+h}$ is decreasing in $h$ and hence
    \begin{align*}
        \frac{\tr(\mathcal{A}_{\varphi_0} + \hat{\eta})}{\lambda_{\max}(\mathcal{A}_{\varphi_0}+\hat{\eta})} 
        \geq \frac{(n^2-2n+2)|\varphi_0|^2+2h}{|\varphi_0|^2+h} 
        \geq \frac{n^2-2n+4}{2}.
    \end{align*}
    For $n=3$ we obtain
    \begin{align}
    \label{EqDimThreeAdjustment}
        0 \leq \mathcal{A}_{\varphi_0} + \hat{\eta} \leq \frac{2}{7} \tr(\mathcal{A}_{\varphi_0} + \hat{\eta}) \id_{\Lambda^{1,1}_{\R}V}.
    \end{align}
\end{remark}

\textit{Proof of Theorem \ref{PrimitiveFormTheorem}.} Let $\varphi \in \Lambda^{n-1,1}T^*M$ be a primitive harmonic $(n-1,1)$-form. If $n \geq 4,$ then Theorem \ref{AdjustmentActionOperator} shows that there exists $\eta \in \Lambda^{2,2}_{\R}T^*M$ such that 
\begin{align*}
    0 \leq \mathcal{A}_{\varphi} + \hat{\eta} \leq 2 | \varphi |^2 \id_{\Lambda^{1,1}_{\R}TM} \ \text{ and } \ \tr( \mathcal{A}_{\varphi} + \hat{\eta}) = (n^2-n+2)| \varphi|^2.
\end{align*}
If the K\"ahler curvature operator $\mathcal{R}$ is $\frac{n^2-n+2}{2}$-nonnegative, then Corollary \ref{CharacterizationBoundsOnRicL} shows that 
\begin{align*}
    g( \Ric_L( \varphi), \overline{\varphi}) \geq 0.
\end{align*}
As every harmonic form satisfies the Bochner formula 
\begin{align*}
    \Delta \frac{1}{2} | \varphi |^2 = | \nabla \varphi |^2 + g( \Ric_L( \varphi), \overline{\varphi}),
\end{align*}
the maximum principle shows that $\varphi$ is parallel. This proves the rigidity case in Theorem \ref{PrimitiveFormTheorem} (b). 

Moreover, for the vanishing result in Theorem \ref{PrimitiveFormTheorem} (a), note that if the K\"ahler curvature operator is $\frac{n^2-n+2}{2}$-positive, then by Remark \ref{RemarkPositivityEstimation} (a) it follows that $g( \Ric_L( \varphi), \overline{\varphi}) > 0$ unless $\varphi=0.$ 

For the estimation theorem, Theorem \ref{PrimitiveFormTheorem} (c), recall that by Remark \ref{RemarkPositivityEstimation}, the assumption $\lambda_1 + \ldots + \lambda_{\floor{l_n}} + ( l_n - \floor{l_n}) \lambda_{\floor{l_n}+1} \geq l_n \kappa$ implies that 
\begin{align*}
    g( \Ric_L( \varphi), \overline{\varphi}) \geq (n^2-n+2) \kappa |\varphi|^2= 2 l_n \kappa |\varphi|^2
\end{align*}
and by assumption the Ricci curvature is bounded from below. Hence, the work of P. Li \cite{LiSobolevConstant} and Gallot \cite{GallotSobolevEstimates} applies and proves the estimation theorem, see also \cite[Theorem 1.9]{PetersenWinkNewCurvatureConditionsBochner}. 

If $n=3$, then replace Theorem \ref{AdjustmentActionOperator} by Remark \ref{DimensionThreeAdjustment} and note that in this case
\begin{align*}
    0 \leq \mathcal{A}_{\varphi_0} + \hat{\eta} \leq \frac{2}{7} \tr(\mathcal{A}_{\varphi_0} + \hat{\eta}) \id_{\Lambda^{1,1}_{\R}TM}
\end{align*}
according to \eqref{EqDimThreeAdjustment}. The rest of the proof remains unchanged. $\hfill \Box$\vspace{2mm}

\textit{Proof of Theorem \ref{TheoremHodgeNumbers}.} By the Hard Lefschetz Theorem, see \cite[Proposition 3.3.13]{HuybrechtsComplexGeometry}, it suffices to show that every primitive harmonic $(n-1,1)$-form and every harmonic $(n-2,0)$-form vanishes. Theorem \ref{PrimitiveFormTheorem} (a) says that indeed every primitive harmonic $(n-1,1)$-form vanishes. Moreover, as the sum of the lowest $p$ eigenvalues of the Ricci curvature is bounded below by the sum of the lowest $np$ eigenvalues of the K\"ahler curvature operator, the Ricci curvature is $(n-2)$-positive and hence Bochner's theorem \cite{BochnerVectorFieldsAndRic} proves that every harmonic $(n-2,0)$-form vanishes. $\hfill \Box$\vspace{2mm}

\textit{Proof of Theorem \ref{MainTheoremCPn}.} Bochner's work \cite{BochnerVectorFieldsAndRic} shows that all holomorphic forms vanish, $h^{p,0}=0$ for $1 \leq p \leq n.$ Theorem \ref{TheoremHodgeNumbers} shows $h^{n-1,1}(M)=0$.

By the Hard Lefschetz Theorem, it suffices to prove vanishing of primitive harmonic $(p,q)$-forms in the remaining irreducible modules. A sufficient condition is given in  \cite[Theorem B]{PetersenWinkHodgeNumbers}. Specifically, if $\mathcal{R}$ denotes the K\"ahler curvature operator, then primitive harmonic 
\begin{enumerate}
    \item $(n-2,2)$-forms vanish provided $\mathcal{R}$ is $\left( 5 - \frac{8}{n} \right)$-positive,
    \item $(n-2,1)$-forms vanish provided $\mathcal{R}$ is $\left( 4 - \frac{2}{n-1} \right)$-positive,
    \item $(1,1)$-forms vanish provided $\mathcal{R}$ is $n$-positive,
\end{enumerate}
and all primitive harmonic forms in other irreducible modules vanish under weaker curvature assumptions. The curvature condition given in Theorem \ref{MainTheoremCPn} is exactly the minimum of these conditions. Note that in the special case $n=3$ we have $h^{2,1}(M)=0$ by Theorem \ref{TheoremHodgeNumbers}. $\hfill \Box$

\section{Examples}

Example \ref{ExampleSharpCandidate} motivates the curvature conditions in Conjecture \ref{ProposedCurvatureConditions} and Example \ref{PWMaximizer} explains that for $n \geq 4$ there is $\varepsilon_n>0$ such that Theorem \ref{MainTheoremCPn} holds if the K\"ahler curvature operator is $(k_n+ \varepsilon_n)$-positive.

\begin{example}
    \label{ExampleSharpCandidate}
    \normalfont Let $Z_1, \ldots, Z_n$ be a unitary basis for $V^{1,0}.$ For $1 \leq p \leq q$ with $p+q \leq n$ define 
    \begin{align*}
        \varphi = 2^{-p/2} \bigwedge_{j=1}^p \left( Z^{2j-1} \wedge \overline{Z^{2j-1}} - Z^{2j} \wedge \overline{Z^{2j}} \right) \wedge \overline{Z^{2p+1}} \wedge \ldots \wedge \overline{Z^{p+q}}.
    \end{align*}
    The forms $Z^{2j-1} \wedge \overline{Z^{2j-1}} - Z^{2j} \wedge \overline{Z^{2j}}$ are primitive and one concludes that $\varphi$ is primitive with $|\varphi|=1$. 

    The spectral decomposition of $\mathcal{A}_{\varphi}$ is given by
       \begin{align*}
        \mathcal{A}_{\varphi} = \begin{cases}
             q-p & \text{on } \R L \text{ if } q>p, \text{ where } L=\frac{1}{\sqrt{q-p}} \left( I_{2p+1, 2p+1} + \ldots + I_{p+q,p+q} \right), \\
            \hspace{3mm} 2  & \text{on } \operatorname{span} \{ R_{2j-1,2j}, I_{2j-1,2j} \}  \text{ for } 1 \leq j \leq p, \\
            \hspace{3mm} \frac{1}{2}  & \text{on } \operatorname{span} \{ R_{ij}, I_{ij} \} \text{ for } 1 \leq i < j \leq 2p, \ (i,j) \neq (2k-1,2k), \\
            \hspace{3mm}   & \hspace{29.04mm} \text{ for } 1 \leq i \leq 2p, \ 2p+1 \leq j \leq n, \\
            \hspace{3mm}   & \hspace{29.04mm} \text{ for } 2p+1 \leq i \leq p+q, \ p+q+1 \leq j \leq n, \\
            \hspace{3.4mm} 0 & \text{otherwise.}
        \end{cases}
    \end{align*}
    Note that $\tr( \mathcal{A}_{\varphi} )=(p+q)(n+1)-2pq-(p-q)^2.$ Moreover, we claim that 
        \begin{align}
        \label{OptimalQuotient}
        \sup_{\substack{\eta \in \Lambda^{2,2}_{\R}V^* \\ \mathcal{A}_{\varphi} + \hat{\eta} \geq 0}} \frac{\tr( \mathcal{A}_{\varphi} + \hat{\eta})}{\lambda_{\max} ( \mathcal{A}_{\varphi} + \hat{\eta})} = \frac{(p+q)(n+1)-2pq- \delta_{|p-q|,1}}{2},
    \end{align}
    which suggests to consider the curvature conditions in Conjecture \ref{ProposedCurvatureConditions}.% Note that $\delta_{|p-q|,1} =1$ if and only if $|p-q|=1.$

    To prove \eqref{OptimalQuotient}, we make a preliminary observation. If $J = \sum_{i=1}^n I_{ii},$ then 
    \begin{align*}
        \tr(\hat{\eta}) & = \sum_{i < j} \left( g( \hat{\eta}(R_{ij}), R_{ij}) + g( \hat{\eta}(I_{ij}), I_{ij}) \right) \\
        & =  - \sum_{i,j} g(\hat{\eta}(I_{ii}),I_{jj}) = - g( \hat{\eta}(J),J).
    \end{align*}
    Moreover, 
    \begin{align*}
        \tr( \mathcal{A}_{\varphi} + \hat{\eta} ) + g( (\mathcal{A}_{\varphi} + \hat{\eta})(J),J)= (p+q)(n+1)-2pq.
    \end{align*}
    Thus, if $\mathcal{A}_{\varphi} + \hat{\eta} \geq 0,$ then $\tr( \mathcal{A}_{\varphi} + \hat{\eta} ) \leq (p+q)(n+1)-2pq.$

    We now consider different cases. If $p=q,$ then $0 \leq \mathcal{A}_{\varphi} \leq 2 \id_{\Lambda^{1,1}_{\R}V}.$ Moreover, if $\mathcal{A}_{\varphi} + \hat{\eta} \geq 0$ for some  $\eta \in \Lambda^{2,2}_{\R}V^*,$ then $\hat{\eta}(I_{ii})=0$ for all $i$ and hence $\tr (\hat{\eta}) =0$ by the above observation. Therefore, any such $\hat{\eta}$ does not change $\tr( \mathcal{A}_{\varphi} + \hat{\eta})$ and $\lambda_{\max}(\mathcal{A}_{\varphi} + \hat{\eta}) \geq g((\mathcal{A}_{\varphi} + \hat{\eta})(R_{2j-1,2j}), R_{2j-1,2j}) = 2.$

    If $|p-q|=1,$ then an analogous argument applies. 

    If $|p-q| \geq 2,$ set
    \begin{align*}
        \eta = - \frac{q-p}{q-p-1} \sum_{2p+1 \leq i < j \leq p+q} e^i \wedge f^i \wedge e^j \wedge f^j.
    \end{align*}
    It follows that $0 \leq \mathcal{A}_{\varphi} + \hat{\eta} \leq 2 \id_{\Lambda^{1,1}_{\R}V}$ and $\tr( \mathcal{A}_{\varphi} + \hat{\eta}) = (p+q)(n+1)-2pq.$ To see that this gives the optimal quotient, suppose that $\mathcal{A}_{\varphi} + \hat{\eta} \geq 0$ for some $\eta \in \Lambda^{2,2}_{\R}V^*.$ As before, observe that $\hat{\eta}(I_{ii})=0$ for $1 \leq i \leq 2p$. Thus, $\lambda_{\max}(\mathcal{A}_{\varphi} + \hat{\eta}) \geq 2$ and the preliminary observation yields $\tr( \mathcal{A}_{\varphi} + \hat{\eta} ) \leq (p+q)(n+1)-2pq$ as claimed.
\end{example}

\begin{example}
    \label{PWMaximizer}    \normalfont
    For $p,q \geq 1$ with $3 \leq p+q \leq n,$ an $L \in \Lambda^{1,1}_{\R}V$ with $|L|=1$ and a primitive $(p,q)$-form $\varphi \in \Lambda^{p,q}V^*$ with $|\varphi|=1$ (and in addition $\varphi=\overline{\varphi}$ if $p=q$) maximizes the estimate $|L\varphi|^2 \leq (p+q)|L|^2 |\varphi|^2$ of \cite[Proposition 3.4]{PetersenWinkHodgeNumbers} if and only if (after a suitable choice of basis)
    \begin{align*}
        L= \pm \frac{1}{\sqrt{p+q}} \left( \sum_{i=1}^p I_{ii} - \sum_{i=p+1}^{p+q} I_{ii} \right)
    \end{align*}
    and  
    \begin{align*}
        \varphi = \begin{cases}
            Z^1 \wedge \ldots \wedge Z^p \wedge \overline{Z^{p+1}} \wedge \ldots \wedge \overline{Z^{p+q}} & \text{ if } p \neq q, \\
            \frac{1}{\sqrt{2}} ( Z^1 \wedge \ldots \wedge Z^p \wedge \overline{Z^{p+1}} \wedge \ldots \wedge \overline{Z^{2p}}  & \\
            \hspace{10mm} + (-1)^p Z^{p+1} \wedge \ldots \wedge Z^{2p} \wedge \overline{Z^{1}} \wedge \ldots \wedge \overline{Z^{p}} )   & \text{ if } p = q.
        \end{cases}
    \end{align*}
    The corresponding action operator satisfies
    \begin{align*}
        \mathcal{A}_{\varphi} = \begin{cases}
            \hspace{3.2mm} 1 & \text{ on } \operatorname{span} \{ R_{ij}, I_{ij} \} \ \text{ for } 1 \leq i \leq p, \ p+1 \leq j \leq p+q, \\
            \hspace{3mm} \frac{1}{2}  & \text{ on } \operatorname{span} \{ R_{ij}, I_{ij} \} \ \text{ for } 1 \leq i \leq p+q, \ p+q < j \leq n, \\
            p+q & \text{ on } \R L, \\
            \hspace{3.4mm} 0 & \text{ on the orthogonal complement.}
        \end{cases}
    \end{align*}
    Choosing $\eta = - \frac{p+q}{2} L \wedge L,$ it follows that $0 \leq \mathcal{A}_{\varphi} + \hat{\eta} \leq \id_{\Lambda^{1,1}_{\R}V}.$ With the same strategy as in the proof of \cite[Theorem 3.6]{WinkThorpeTrickForBochner} one can show that there is $\varepsilon(p,q)>0$ such that for every primitive $\varphi \in \Lambda^{p,q}V^*$ there exists $\eta \in \Lambda^{2,2}_{\R}V^*$ such that
    \begin{align*}
        0 \leq \mathcal{A}_{\varphi} + \hat{\eta} \leq (p+q-\varepsilon) |\varphi|^2 \id_{\Lambda^{1,1}_{\R}V}.
    \end{align*}
    In particular, if $\varphi_{\max}$ is a maximizer closest to $\varphi,$ then $\varphi_{\max}$ determines $L$ as above and one can choose $\eta = - \frac{p+q}{4} L \wedge L$. Otherwise $\eta=0$ works. 
\end{example}

%\bibliography{References}
%\bibliographystyle{amsalpha}

%\begin{comment}

%\end{comment}

\end{document}